\newcommand{\NN}{{\mathbb N}}
\newcommand{\RR}{{\mathbb R}}
\newcommand{\PP}{{\mathbb P}}
\newcommand{\SR}{{\mathbb S}}
\DeclareMathOperator{\id}{\mathds{1}\mkern-1mu}
\DeclareMathOperator{\codim}{codim}
\DeclareMathOperator{\rank}{rank}
\DeclareMathOperator{\corank}{corank}
\DeclareMathOperator{\val}{val}
\DeclareMathOperator{\interior}{int}
\DeclareMathOperator{\closure}{cl}
\newcommand{\ACQ}[2]{{\mathrm{ACQ}_{#1}(#2)}} 
\renewcommand{\subset}{\subseteq}
\newcommand\Qtheta{{\eqref{eq:Qtheta}}\xspace}
\newcommand\Ptheta{{\eqref{eq:Ptheta}}\xspace}
\newcommand\Dtheta{{\eqref{eq:Dtheta}}\xspace}
\newcommand\Qthetabar{{(\hyperref[eq:Qtheta]{$Q_{\bar{\theta}}$})}\xspace}
\newcommand\Pthetabar{{(\hyperref[eq:Ptheta]{$P_{\bar{\theta}}$})}\xspace}
\newcommand\Dthetabar{{(\hyperref[eq:Dtheta]{$D_{\bar{\theta}}$})}\xspace}
\newcommand\Qthetaobj{{\eqref{eq:Qthetaobj}}\xspace}
\newcommand\Qthetabarobj{{(\hyperref[eq:Qthetaobj]{$Q_{\bar{\theta}}^{\mathrm{obj}}$})}\xspace}
\newcommand\Qthetatilde{{\eqref{eq:Qthetatilde}}\xspace}
\newcommand\Etheta{{\eqref{eq:nearestpoint}}\xspace}
\newcommand\Ethetabar{{(\hyperref[eq:nearestpoint]{$E_{\bar{\theta}}$})}\xspace}
\DeclareRobustCommand{\textsupsub}[2]{{%
  \m@th\ensuremath{%
    ^{\mbox{\footnotesize#1}}%
    _{\mbox{\fontsize\sf@size\z@#2}}%
  }%
}}
\newtheorem{theorem}{Theorem}
\numberwithin{theorem}{section}
\newtheorem{proposition}[theorem]{Proposition}
\newtheorem{corollary}[theorem]{Corollary}
\newtheorem{lemma}[theorem]{Lemma}
\theoremstyle{definition}
\newtheorem{definition}[theorem]{Definition}
\newtheorem{example}[theorem]{Example}
\theoremstyle{remark}
\newtheorem{remark}[theorem]{Remark}
\let\oldtabular\tabular
\renewcommand{\tabular}{\footnotesize\oldtabular}
\title[On the local stability of SDP relaxations]{On the local stability of semidefinite relaxations}
\date{\today}
\author{Diego Cifuentes$^{\,1} (\textrm{\,\Letter})$}
\address{$\mkern-19mu^1$
Massachusetts Institute of Technology, Cambridge MA 02139, USA}
\email{diegcif@mit.edu}
\author{Sameer Agarwal$^{\,2}$}
\address{$^2$
Google Inc.
}
\email{sameeragarwal@google.com }
\author{Pablo A. Parrilo$^{\,1}$}
\email{parrilo@mit.edu}
\author{Rekha R. Thomas$^{\,3}$}
\address{$^3$
University of Washington, Seattle WA 98195, USA}
\email{rrthomas@uw.edu}
\keywords {Parametric SDP, Stability, Sum of squares, Algebraic variety}
\begin{document}
\maketitle

\begin{abstract}
  We consider a parametric family of quadratically constrained quadratic programs (QCQP) and their associated semidefinite programming (SDP) relaxations.
  Given a nominal value of the parameter at which the SDP relaxation is exact,
  we study conditions (and quantitative bounds) under which the relaxation will continue to be exact as the parameter moves in a neighborhood around the nominal value.
  Our framework captures a wide array of statistical estimation problems including
  tensor principal component analysis, rotation synchronization, orthogonal Procrustes, camera triangulation and resectioning, essential matrix estimation, system identification, and  approximate~GCD.
  Our results can also be used to analyze the stability of SOS relaxations of general polynomial optimization problems.
\end{abstract}

\section{Introduction}
A number of parameter estimation problems \cite{Aholt2012,Wang2013,Fredriksson2012,Rosen2016,Nie2014,Chu2001,Cifuentes2018,Zhao2019,Kaltofen2006,Bookstein1979,Waldspurger2015,Biswas2004,Luo2010} in statistics and engineering
can be posed as minimizing a polynomial over an algebraic variety. For example, a commonly occurring form is:
\begin{align} \label{eq:prob2}
    \min_{y,z}\ \|y - \theta\|^2,
    \qquad\text{s.t.}\qquad
    g(y,z) = 0.
\end{align}
where $\theta$ is a vector of noisy observations from the {\em model} which is an algebraic variety described by a system of quadratic polynomials~$g$.
The aim is to find $y$ and $z$ that best explains the observations $\theta$, and
by {\em best} we mean the maximum likelihood estimate (MLE) of $y$ and~$z$%
\footnote{To keep the discussion simple,  we are assuming identical and independently distributed Gaussian noise, but many other choices are possible}.
Problem~\eqref{eq:prob2} is an instance of a quadratically constrained quadratic program (QCQP). While QCQPs are hard to solve, their Lagrangian duals are semidefinite programs (SDP) that can be solved efficiently.
In general this SDP is a relaxation of the QCQP
in the sense that its optimal value is only a lower bound to the
optimal value of the QCQP. However, in some instances, their values agree (i.e. the duality gap is zero)
and we say that the SDP relaxation is tight. Indeed this has been found to happen in a number of estimation problems in practice, when the noise is small. We illustrate this phenomenon on the following simple example:
\begin{example}[Nearest point to the twisted cubic]\label{exmp:twistedcubic}
  Let $\mathbf{Y} := \{(t,t^2,t^3): t\in \RR\}$ be the twisted cubic curve in~$\RR^3$.
  Consider the problem of finding the nearest point from $\theta\in \RR^3$ to the curve~$\mathbf{Y}$.
  This problem can be phrased as the QCQP:
  \begin{align}\label{eq:twistedcubic}
    \min_{y\in \mathbf{Y}}\quad &\|y-\theta\|^2, \quad \text{ where }\quad
    \mathbf{Y}= \{ y\in \RR^3: y_2 \!=\! y_1^2,\; y_3 \!=\! y_1y_2\}.
  \end{align}
  Its Lagrangian dual is the SDP:
  \begin{equation}\label{eq:twistedcubicdual}
  \begin{aligned}
    \max_{\lambda_0,\lambda_1,\lambda_2\in\RR}\quad & -\lambda_0,
    \quad\text{s.t.}\quad
    \left(\begin{smallmatrix}
      \lambda_0+ \|\theta\|^2 &-\theta_1   &\lambda_1-\theta_2  &\lambda_2-\theta_3 \\
      -\theta_1 &1-2\lambda_1  &-\lambda_2  &0 \\
      \lambda_1-\theta_2 &-\lambda_2  &1  &0 \\
      \lambda_2-\theta_3 &0  &0  &1
      \end{smallmatrix}\right) \succeq 0.
  \end{aligned}
  \end{equation}
   \Cref{fig:twistedcubic} shows the
  projection of $Y$ onto the $y_1y_3$-plane, and the duality gap for parameters~$\theta$ of the form $(\theta_1,\theta_1^2,\theta_3)$.
  For all parameters in the dotted region around $\mathbf{Y}$, the QCQP has
  zero-duality gap in the sense that its optimal value agrees with that of its Lagrangian dual.

  \begin{figure}[hbt]
    \centering
    \includegraphics[clip,trim=0 {8pt} 0 {20pt},width=370pt]{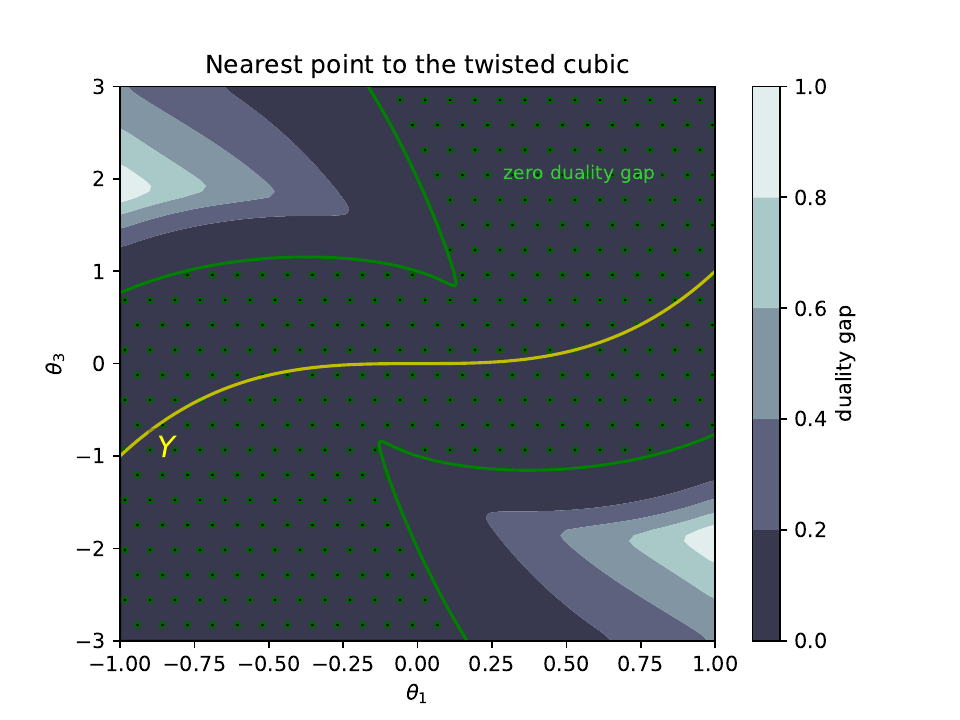}
    \caption{Duality gap in the QCQP~\eqref{eq:twistedcubic} for parameters $\theta$ of the form $(\theta_1,\theta_1^2,\theta_3)$.
  There is no duality gap in the dotted region.}
    \label{fig:twistedcubic}
  \end{figure}
\end{example}

Problem~\eqref{eq:prob2} has the property that if $\theta$ is {\em noiseless}, i.e., $\theta = \bar \theta$ lies on the algebraic variety,
then the QCQP corresponding to $\bar\theta$ has zero duality gap.
The interesting feature is that
in many instances, as in the above example, \eqref{eq:prob2} continues to have zero duality gap when $\theta$ is close to $\bar{\theta}$.
In some cases, there are problem specific explanations for when we can expect the relaxation to be tight under low noise~\cite{Aholt2012,Rosen2016,Wang2013,Fredriksson2012}.
There is, however, no general understanding of this phenomenon. The aim of this paper is to fill this gap.

One approach to studying the tightness of SDP relaxations in the
{\em low noise regime} is to think of problem~\eqref{eq:prob2} as a perturbation of a noiseless instance, whose relaxation is tight. This is the perspective we will take in this paper. Further, we will not limit ourselves to the special problem in~\eqref{eq:prob2} and will instead consider general QCQPs.

We point out that our focus on SDP relaxations from QCQPs does not pose a practical limitation,
since most of the SDP relaxations used in practice arise in such a way.
In some cases, though, the QCQP is not immediately apparent to the user.
For instance, the nuclear norm minimization problem
$\min\{\|M\|_* : M \!\in\! \RR^{m\times n}, \mathcal{A}(M) \!=\! b \}$,
which can be rewritten as an SDP,
and is often used~\cite{Fazel2002} to find a low rank matrix $M$ satisfying affine constraints $\mathcal{A}(M) \!=\! b$,
arises as the relaxation of the QCQP
$\min\{\frac{1}{2}(\|u\|^2 \!+\! \|v\|^2) : u \!\in\! \RR^m, v \!\in\! \RR^n, \mathcal{A}(u v^T) \!=\! b\}$.

The problem we consider is the following.
Let $\Theta\subset\RR^d$ be a parameter space,
and consider a family of QCQPs parametrized by $\theta\in \Theta$:
\begin{equation*} \label{eq:Qtheta}
\tag{\textit{{Q}\textsubscript{\texttheta}}}
\begin{aligned}
  \min_{x\in\RR^N}\quad
  & f_\theta(x) :=x^\top F_\theta x \\
  \text{s.t.}\quad
  & g_\theta^i(x):= x^\top G_\theta^i x + c^i_\theta =0,\quad
  i=1,\dots,m
\end{aligned}
\end{equation*}
where $F_\theta, G^i_\theta\in \SR^N$ are symmetric $N\times N$ matrices,
and $c^i_\theta\in \RR$ are scalars.
We assume that at least one $c^i_\theta$ is nonzero,
which ensures that $x\!=\!0$ is not a solution of \Qtheta.
We will also assume that the map $\theta\mapsto (F_\theta, G^i_\theta,c^i_\theta)$ is continuous.
A QCQP such as \Qtheta that involves no
linear terms is said to be {\em homogeneous}.
While our main results are formulated for
homogeneous QCQPs, in each case we also write down the
accompanying statement for the inhomogeneous version.

The QCQP~\Qtheta has an associated pair of SDP relaxations that are dual to each other:

\begin{minipage}{.50\linewidth}
\begin{equation*}\label{eq:Ptheta}
\tag{\textit{{P}\textsubscript{\texttheta}}}
\begin{aligned}
  \min_{S\in\SR^N}\quad
  & {F_\theta}\bullet S \\
  \text{s.t.}\quad
  & G^i_\theta \bullet S + c^i_\theta = 0, \quad i=1,\dots,m\\
  & S \succeq 0
\end{aligned}
\end{equation*}
\end{minipage}\qquad
\begin{minipage}{.40\linewidth}
\begin{equation*}\label{eq:Dtheta}
\tag{\textit{{D}\textsubscript{\texttheta}}}
\begin{aligned}
  \max_{\lambda\in \RR^m}\quad
  & \textstyle\sum_i \lambda_i c^i_\theta \\
  \text{s.t.}\quad
  & \mathcal{H}_\theta(\lambda)
  := F_\theta + \textstyle\sum_i \lambda_i G^i_\theta
  \succeq 0
\end{aligned}
\end{equation*}
\end{minipage}

\noindent
where $\bullet$ denotes the trace inner product in $\SR^N$.
The SDP \Ptheta will be referred to as the \emph{primal
SDP relaxation} of \Qtheta.
There is a bijection between the solutions of \Qtheta and the rank one solutions of \Ptheta via $x \mapsto xx^\top$.
Hence, if an optimal solution of \Ptheta has rank one, then
\Ptheta recovers a minimizer of \Qtheta.
Problem \Ptheta is sometimes known as the {\em Shor relaxation} of~\Qtheta.

Problem \Dtheta is the dual SDP of \Ptheta
as well as the {\em Lagrangian dual} of~\Qtheta. Its objective function is traditionally
written as
$\max_{\lambda \in \RR^m} \min_{x \in \RR^N} L_\theta(x,\lambda)$ where
\begin{align*}
    L_\theta(x,\lambda) = f_\theta(x) + \sum_{i} \lambda_i g^i_\theta(x)
\end{align*}
is the {\em Lagrangian function} of \Qtheta.
Note that
$ \mathcal{H}_\theta(\lambda) = \tfrac{1}{2}\, \nabla^2_{xx} L_\theta(x,\lambda)$
is half the Hessian of the Lagrangian.

The inequalities $\val\Qtheta \geq
\val\Ptheta\geq \val\Dtheta$ always hold.
We say that \Qtheta has {\em zero duality gap} if $\val\Qtheta=
\val\Ptheta= \val\Dtheta$.
Throughout this paper we denote by~$\bar{\theta}$ a nominal value of the parameter~$\theta$, such that \Qthetabar  has zero duality gap.

\begin{definition}[SDP Stability]
  The family $\Qtheta$ is \emph{SDP-stable} near $\bar{\theta}$
  if there exists a neighborhood of $\bar{\theta}$ where $\val\Qtheta=\val\Ptheta= \val\Dtheta$.
\end{definition}

In this paper we will derive theorems that establish SDP-stability of \Qtheta near~$\bar\theta$,
so that the optimal value of the SDP relaxation agrees with that of the QCQP when $\theta$ is close to $\bar\theta$.
Moreover, the conditions in our theorems also guarantee that
the SDP relaxation \Ptheta recovers the minimizer of the QCQP.

\subsection*{Contributions and structure of the paper}

We start this paper, in \Cref{s:zerodualitygap}, by recalling a simple sufficient condition for zero duality gap in QCQPs.
As a corollary, the SDP relaxation of the nearest point problem to a quadratic variety is tight,
when the observed point is on the variety.

Our main contribution consists of two stability results
that guarantee zero duality gap for \Qtheta when $\theta$ close to~$\bar \theta$.
Our first result is \Cref{thm:firstresulthom} in \Cref{s:firstresult}
which focuses on QCQPs in which the constraints are independent of~$\theta$
and the cost function satisfies a certain strict convexity property.
\Cref{thm:firstresulthom} shows that a natural regularity condition (constraint qualification) on the optimal solution of \Qthetabar guarantees SDP-stability nearby~$\bar\theta$.
An important special instance is the nearest point problem
$\min\{ \|y \!-\! \theta \|^2 : y \in \mathbf{Y}\}$
to a quadratic variety~$\mathbf{Y}$.
\Cref{thm:firstresultnearest} shows SDP-stability nearby any point $\bar\theta \in \mathbf{Y}$ which is sufficiently regular.
Moreover, \Cref{thm:boundnearest} provides explicit bounds on the allowed perturbations.

Our second result is \Cref{thm:main} in \Cref{s:main},
which addresses the general family \Qtheta introduced above.
In particular, it allows perturbations in the constraints, and nonconvex cost functions.
\Cref{thm:main} shows that appropriate regularity conditions, and a technical restricted Slater condition that we introduce,
guarantee SDP-stability near~$\bar\theta$.
The restricted Slater condition is non-trivial (see \Cref{exmp:twistedcubicbad}). However, it can be checked efficiently by solving an SDP.

A large number of statistical estimation problems fall under the umbrella of our results,
and in each of these cases we can show that the SDP relaxation can solve the nonconvex QCQP in the {low-noise regime}.
\Cref{s:applications} illustrates the following applications:
\emph{(machine learning)} tensor principal component analysis,
\emph{(computer vision)}
triangulation problem, camera resectioning, essential matrix estimation,
\emph{(robotics)}
rotation synchronization, $SE(d)$ synchronization, orthogonal Procrustes,
\emph{(control theory)}
system identification,
\emph{(symbolic computation)} approximate GCD.
Collectively, \Cref{thm:firstresulthom,thm:main} explain many special instances in the literature
where the above mentioned zero duality gap phenomenon has been observed under low noise, fulfilling our stated goal.

Even though our theory is developed for QCQPs,
the results can be applied to analyze more general SDP relaxations of polynomial optimization problems.
In particular, they can be used to analyze SDP relaxations based on the \emph{sum-of-squares} (SOS) method.
This is illustrated in \Cref{s:sos}.
A highlight is \Cref{thm:sos}, that applies our theory to derive a stability result for the SOS method in unconstrained polynomial optimization.

\subsection*{Related work}
Our work straddles two lines of inquiry within  nonlinear optimization.

The first is the use of SDP relaxations to solve nonlinear, nonconvex optimization problems~\cite{Luo2010,Blekherman2013}.
There are several results concerning conditions under which SDP relaxations are tight \cite{Zhang2000,Ye2003,Beck2006,Kim2003}.
Classes of QCQPs such as trust-region problems~\cite{Stern1995}, S-lemma type problems~\cite{Polik2007}, and combinatorial optimization problems~\cite{Gouveia2010} have been well investigated, and
these references are far from exhaustive.

The second is the  perturbation theory of nonlinear programs. This is also a mature area of work~\cite{Bonnans2013,Fiacco1990,Levy2000}.
In particular, sufficient conditions for continuity and differentiability of the optimal value/solutions are known~\cite{Bonnans2013,Fiacco1990}.
Similarly, the Lipschizian stability of general optimization problems, together with concepts such as tilt/full stability, has received a lot of attention~\cite{Levy2000,Mordukhovich2013}.

The subject of this paper is the tightness of SDP relaxations as the QCQP is perturbed. This topic has attracted attention over the past decade as SDPs have been used successfully to solve estimation problems in practice, but a theoretical explanation of their efficacy has remained elusive. However, a number of problem specific tightness results have appeared. In particular, it was shown that the SDP relaxations of the triangulation problem~\cite{Aholt2012} and the rotation synchronization problem are tight under low noise assumptions~\cite{Eriksson2018,Fredriksson2012}.
Our results provide a uniform framework to analyze these and a much broader class of QCQPs.

Since a sufficient condition for zero duality gap is that the SDP solution has rank-one, we may equivalently study whether the rank of the minimizer is preserved after small perturbations of~$\theta$.
Past work (e.g. \cite{Freund2004,Nayakkankuppam1999}) established this rank stability assuming a unique primal/dual optima and strict-complementarity.
However, for SDPs coming from polynomial optimization problems the dual solution may not be unique (due to redundant constraints) and strict complementarity may not hold, so these results cannot be applied.
We are not aware of previous results that avoid these hypotheses.

\section{Zero Duality Gap For QCQPs}\label{s:zerodualitygap}

We begin by recalling a sufficient condition for zero duality gap
in QCQPs that will be useful in later sections.
Consider the homogeneous QCQP:
\begin{equation*} \label{eq:Q}
\tag{\textit{Q}}
\begin{aligned}
  \min_{x\in\RR^N}\quad
  & f(x) :=x^\top F x \\
  \text{s.t.}\quad
  & g^i(x):= x^\top G^i x + c^i =0,\quad i=1,\dots,m
\end{aligned}
\end{equation*}
where $F, G^i\in \SR^N$ are symmetric $N\times N$ matrices,
and $c^i\in \RR$ are scalars, with at least one of them nonzero.
Recall that the dual pair of SDP relaxations of \eqref{eq:Q} are:

\begin{minipage}{.50\linewidth}
\begin{equation*}\label{eq:P}
\tag{\textit{{P}}}
\begin{aligned}
  \min_{S\in\SR^N}\quad
  & {F}\bullet S \\
  \text{s.t.}\quad
  & G^i \bullet S + c^i = 0, \quad i=1,\dots,m\\
  & S \succeq 0
\end{aligned}
\end{equation*}
\end{minipage}\qquad
\begin{minipage}{.40\linewidth}
\begin{equation*}\label{eq:D}
\tag{\textit{{D}
}}
\begin{aligned}
  \max_{\lambda\in \RR^m}\quad
  &\textstyle\sum_i \lambda_i c^i \\
  \text{s.t.}\quad
  & \mathcal{H}(\lambda)
  \succeq 0
\end{aligned}
\end{equation*}
\end{minipage}

\noindent
where $\mathcal{H}(\lambda) \in \SR^N$ is half the Hessian of the Lagrangian function $L(x,\lambda)$:
\begin{align*}
  L(x, \lambda) :=
  f + \sum_i \lambda_i g^i,
  \qquad
  \mathcal{H}(\lambda) := \tfrac{1}{2}\,\nabla^2_{xx} L(x, \lambda) =
  F + \sum_i \lambda_i G^i.
\end{align*}

Given a feasible solution $x$ of~\eqref{eq:Q}, $\lambda\in\RR^m$ is a \emph{Lagrange multiplier} at~$x$ if
\begin{align} \label{eq:Lagrange multiplier}
  \nabla_x L(x, \lambda) = 0,
  \qquad\text{or equivalently}\qquad
  \mathcal{H}(\lambda)x
  = F x + \sum\nolimits_i \lambda_i G^i x
  = 0.
 \end{align}
We denote by $\Lambda(x)$ the affine space in $\RR^m$ of Lagrange multipliers at~$x$.
We say that $x$ is a \emph{critical point} of \eqref{eq:Q} if it is feasible and $\Lambda(x)$ is nonempty.
It is known that all local minima of~\eqref{eq:Q} are critical points under appropriate regularity conditions
(see e.g.,~\cite[\S5]{Bazaraa2013}).

The following lemma gives a sufficient condition for zero duality gap for  \eqref{eq:Q}.
\begin{lemma}\label{lem:zerodualitygap}
  Consider the QCQP~\eqref{eq:Q}.
  Let $x\in \RR^N, \lambda\in \RR^m$ be such that:
  \begin{enumerate}[label=(\roman*)]
    \item $g^i(x) = 0$ for $i=1,\ldots,m$ (
    $x$ is primal feasible).
    \item $\mathcal{H}(\lambda)\succeq 0$ ($\lambda$ is dual feasible).
    \item $\mathcal{H}(\lambda) x = 0$ ($\lambda$ is a Lagrange multiplier at $x$).
  \end{enumerate}
  Then $x$ is optimal for~\eqref{eq:Q}, $\lambda$ is optimal for~\eqref{eq:D}, and $\val\eqref{eq:Q}=\val\eqref{eq:D}$.
If in addition, $\mathcal{H}(\lambda)$ has corank one,
   then $x x^\top$ is the unique optimum of~\eqref{eq:P}, and $x$ is the unique optimum
   of~\eqref{eq:Q}, up to sign.
\end{lemma}

\begin{proof}
  Since $\mathcal{H}(\lambda) x = 0$ and $x$ is primal feasible then
  \begin{align*}
    0 \,=\, x^\top \mathcal{H}(\lambda) x
    \,=\, x^\top F x + \sum\nolimits_i \lambda_i\, x^T G^i x
    \,=\, x^\top F x - \sum\nolimits_i \lambda_i c^i,
  \end{align*}
  so the primal value of $x$ equals the dual value of $\lambda$.
  By weak duality we have that $x$ and $\lambda$ are primal/dual optimal.

  Suppose $S$ is an optimal solution of~\eqref{eq:P}. Then $S \neq 0$ since at least one $c^i \neq 0$.
  By complementary slackness, $\mathcal{H}(\lambda)\bullet S = 0$, and since $\mathcal{H}(\lambda)$ and $S$ are both positive semidefinite,
  $\rank\mathcal{H}(\lambda) \!+\! \rank S\leq N$.
  If $\corank \mathcal{H}(\lambda) \!=\! 1$ then $\rank S \!=\! 1$, so any optimal solution of \eqref{eq:P} has rank one.
  This implies that
  \eqref{eq:P} has a unique optimal solution
  $S$ since if there was a second optimal solution $S'$,
  then there would be an element in their convex hull of
  rank two that is also optimal for \eqref{eq:P}.
  Since every rank one optimal solution
  of \eqref{eq:P} corresponds to an optimal solution of
  \eqref{eq:Q}, it must be that $x$ is the
  unique optimal solution of \eqref{eq:Q} up to sign,  and $S = x x^\top$.
\end{proof}

\Cref{lem:zerodualitygap} is well known, see e.g., \cite[Thm~2]{Aholt2012}.
  A generalization to inequality constrained QCQPs is given in \cite{Zheng2012}.
We have given a complete proof here since this lemma is an important tool
in this paper.

The main results in this paper are all stated for homogeneous QCQPs, while in many applications the QCQPs that arise are inhomogeneous. We take a moment to discuss the effect of homogenization.

One can always rid a quadratic equation
of linear terms by introducing a {homogenizing} variable~$z_0$ where
$z_0^2=1$;
the inhomogeneous quadratic polynomial
$\tilde{g}(y) = y^\top \tilde{G} y \!+\! 2\tilde{l}^\top y \!+\! \tilde{c}$
homogenizes to
\begin{align*}
  y^\top \tilde{G} y +  2\tilde{l}^\top y z_0  + \tilde{c} z_0^2
  = (z_0, y^\top) \begin{pmatrix} \tilde{c} & \tilde{l}^\top\\
    \tilde{l} & \tilde{G} \end{pmatrix} \begin{pmatrix} z_0 \\
  y \end{pmatrix} = x^\top G x = g(x),
  \qquad
  z_0^2=1,
\end{align*}
where $x=(z_0,y)$.
For instance, the homogenized form of~\eqref{eq:twistedcubic} is:
  \begin{equation*}
  \begin{aligned}
    \min_{z_0\in \RR,\,y\in\RR^3}\quad & \|y-\theta z_0\|^2,
    \qquad \text{ s.t. }\qquad
    z_0^2 = 1,\quad
    y_2z_0 = y_1^2,\quad
    y_3z_0 = y_1 y_2
  \end{aligned}
  \end{equation*}
which is a problem in the variables $x = (z_0,y_1,y_2,y_3)$.
There is a 1:2 correspondence between the solutions of an inhomogeneous
QCQP and its homogenization:
$y$~is a solution of an inhomogeneous
QCQP if and only if $\pm(1,y)$ are solutions of its homogenization. Therefore we can assume that $z_0 = 1$. The optimal objective function values of both problems coincide and
hence there is no loss of generality in assuming the homogeneous form.

We now apply \Cref{lem:zerodualitygap} to the following inhomogeneous QCQP:
\begin{align*} \label{eq:nearestpoint}
   \tag{$E_\theta$}
   \min_{y\in\RR^n} \quad \| y - \theta\|^2,
   \qquad\text{s.t.}\qquad
   \tilde{g}^i(y) \!=\! 0,\quad i \!=\! 1,\ldots, m.
\end{align*}
This is the nearest point problem for an algebraic variety
$\mathbf{Y}$ cut out by quadratic polynomials, i.e.,
$$ \mathbf{Y} = \{ y \in \RR^n \,:\, \tilde{g}^i(y) \!=\! 0,\;\; i \!=\! 1,\ldots, m \}.$$
We will prove that for a parameter $\bar\theta$ on the variety $\mathbf{Y}$,
the problem \Ethetabar has zero duality gap.
The optimal solution of \Ethetabar is obviously~$\bar\theta$,
but the point is that this can be recognized by its SDP relaxation,
which  will be useful when we consider families of nearest point problems in the next section.

\begin{corollary}[Nearest point to a quadratic  variety]
  \label{cor:nearestpoint-zerodualitygap-quadratic}
  The nearest point problem \Ethetabar, for the quadratic variety~$\mathbf{Y}$,
  has zero duality gap when $\bar\theta$ lies on~$\mathbf{Y}$.
\end{corollary}

\begin{proof}
  After homogenization, the problem \Ethetabar becomes:
  \begin{equation*}
    \begin{aligned}
      \min_{x\in\RR^{n+1}}\quad
      & f_{\bar\theta}(x) = \|y -\bar\theta z_0\|^2,
      \qquad \text{s.t.}\qquad
      z_0^2 \!=\! 1,\qquad
      g^i(x) \!=\! 0,\quad i \!=\! 1, \ldots, m,
    \end{aligned}
  \end{equation*}
  where $x = (z_0,y)$ and $g^i$ is the homogenization of $\tilde{g}^i$.
  An optimal solution of the homogeneous QCQP is
  $\bar{x}=(1,{\bar\theta})$ since $\bar\theta$ lies on $\mathbf{Y}$.
  Check that this $\bar{x}$ and $\bar\lambda \!=\! 0$ satisfy the conditions of \Cref{lem:zerodualitygap}.
  The proof relies on the fact that
  $F_{\bar\theta} =
  \Bigl(\begin{smallmatrix} \|\bar\theta\|^2 & - {\bar\theta}^\top \\ - \bar\theta & I \end{smallmatrix}\Bigr)
  \succeq 0,$
  and that since the optimal value is $\bar{x}^\top F_{\bar\theta} \bar{x} = 0$,
  we have that $F_{\bar\theta} \bar{x} = 0$.
\end{proof}

\section{A Special Case}\label{s:firstresult}
In this section we establish stability results for a simplified
version of the parametrized family \Qtheta from the Introduction.
In particular, we assume that the parameter $\theta$ only appears in the objective function and not in the constraints.
More precisely, we consider the family of QCQPs:
\begin{align}\label{eq:Qthetaobj}
\tag{$Q_\theta^{\mathrm{obj}}$}
  \min_{x\in\RR^N}\quad &
  f_\theta(x) := x^\top F_{\theta} x,
  \qquad\text{s.t.}\qquad
  g^i(x)\!:=\! x^\top G^i x + c^i \!=\!0, \quad i \!=\! 1,\ldots,m.
\end{align}
We also assume throughout this section that the nominal parameter~$\bar\theta$ is such that $F_{\bar\theta} \succeq 0$,
$\corank F_{\bar\theta} \!=\! 1$,
and the optimal value $\val(Q_{\bar\theta}^{\mathrm{obj}}) = 0$. These assumptions imply that \Qthetabarobj has zero duality gap and that it has a unique optimal solution $\bar{x}$ that can be
recovered by its SDP relaxation $(P_{\bar \theta}^{\mathrm{obj}})$.
As in the proof of \Cref{cor:nearestpoint-zerodualitygap-quadratic},
check that any optimal solution $\bar{x}$ of \Qthetabarobj
and $\bar\lambda \!=\! 0$ satisfy the conditions of \Cref{lem:zerodualitygap}.

We will prove that~\Qthetaobj is SDP-stable near~$\bar\theta$.
As a corollary we will obtain a stability result for the nearest point problem on a quadratic variety.
We will also derive bounds on the magnitude of the perturbations that can be tolerated.

In order to prove SDP-stability,
we require a regularity assumption on the optimal solution $\bar{x}$ of~\Qthetabarobj.
Several regularity conditions (constraint qualifications) have been studied in optimization.
We will rely on the following, which is one of the weakest.

\begin{definition}\label{defn:ACQ}
  Given $g : \RR^N\!\to\! \RR^m$, let $\mathbf{X}:= \{x\!\in\! \RR^N: g(x) \!=\! 0 \}$.
  The \emph{Abadie constraint qualification} (ACQ) holds at~$x\in \mathbf{X}$, denoted $\ACQ{\mathbf{X}}{x}$,
  if $\mathbf{X}$ is a smooth manifold nearby $x$,
  and $\rank\nabla{g}(x) = \codim_x \mathbf{X}$.
  Here $\codim_x \mathbf{X}:= N\!-\!\dim_{x} \mathbf{X}$ denotes the local codimension of~$\mathbf{X}$ at~$x$,
  and $\nabla g$ denotes the Jacobian matrix.
\end{definition}

It is well-known that $\ACQ{\mathbf{X}}{x}$ guarantees the existence of Lagrange multipliers at~$x$ (see e.g.,~\cite[\S5.1]{Bazaraa2013}).
In this paper we are only interested in the case where $g$ is a polynomial map ($\mathbf{X}$ is an algebraic variety).
In that case the condition $\rank\nabla{g}(x)= \codim_{x} \mathbf{X}$
implies that $x$ is a \emph{smooth} point of~$\mathbf{X}$
(see e.g., \cite[\S3.3]{Bochnak2013}),
and consequently, $\ACQ{\mathbf{X}}{x}$ holds if and only if
$\rank\nabla{g}(x)= \codim_{x} \mathbf{X}$.

The main result of this section is the following stability
theorem for homogeneous QCQPs:
\begin{theorem}\label{thm:firstresulthom}
  Consider the family \Qthetaobj,
  where $F_\theta$ is a continuous function of~$\theta$ and $c\!\neq\! 0$.
  Let $\bar{\theta}$ be such that
  $F_{\bar{\theta}} \succeq 0$, has corank one,
  and $\val(Q_{\bar\theta}^{\mathrm{obj}}) \!=\! 0$.
  If further, $\ACQ{\mathbf{X}}{\bar{x}}$ holds,
  where
  $\mathbf{X} = \{ x \!\in\! \RR^N : g(x) \!=\! 0\}$
  is the feasible set,
  then $(Q_{\theta}^{\mathrm{obj}})$ is SDP-stable near~$\bar\theta$,
  and its primal SDP relaxation $(P_\theta^{\mathrm{obj}})$ recovers its minimizer.
\end{theorem}

\begin{remark}
  Observe that $F_{\bar\theta} \!\succeq\! 0$, $\val(Q_{\bar\theta}) \!=\! 0$ if and only if $\bar\lambda \!=\! 0$ is optimal for~\Dthetabar.
  This assumption is nonrestrictive,
  as we can always ensure that $\bar\lambda \!=\! 0$ by adding to the cost function a linear combination of the constraints.
  The crucial assumptions of \Cref{thm:firstresulthom} are that $\mathcal{H}_{\bar\theta}(\bar\lambda) \!=\! F_{\bar\theta}$ has corank one and $\ACQ{\mathbf{X}}{\bar x}$ holds.
\end{remark}

In order to prove the theorem,
we first derive a tool for establishing SDP-stability near $\bar \theta$,
that applies not just to the QCQPs in this section,
but to the general family \Qtheta from the Introduction.

Consider the \emph{Lagrange multiplier mapping}:
\begin{equation}\label{eq:lagrangeset}
  \begin{aligned}
    \mathfrak{L}: \Theta \rightrightarrows \RR^N\times \RR^m,
    \qquad
    \theta\mapsto\,
    &\{(x_\theta,\lambda_\theta): x_\theta \text{ feasible for \Qtheta},\; \lambda_\theta\in \Lambda_\theta(x_\theta)\} \\
    &=\{(x_\theta,\lambda_\theta): g_\theta(x_\theta) {=} 0,\; \mathcal{H}_\theta(\lambda_\theta)x_\theta {=} 0\}.
  \end{aligned}
\end{equation}
As we will see, continuity properties of $\mathfrak{L}$ play a crucial role in
stability.

\begin{definition}
  The Lagrange multiplier mapping $\mathfrak{L}$ is \emph{weakly continuous}
  at a pair $\bar\ell = (\bar x, \bar\lambda) \in \mathfrak{L}(\bar\theta)$
  if there exists $\ell_\theta \in \mathfrak{L}(\theta)$ such that $\ell_\theta \to \bar\ell$ as $\theta\to\bar\theta$.
\end{definition}

\begin{proposition}
  \label{prop:stability}
  Let $\bar\theta$ be a zero duality gap parameter,
  and let $(\bar{x},\bar\lambda)$ primal/dual optimal of \Qthetabar.
  Suppose that $\mathcal{H}_{\bar\theta}(\bar\lambda)$ has corank one
  and that the mapping $\mathfrak{L}$ is weakly continuous at~$(\bar x,\bar\lambda)$.
  Then \Qtheta is SDP-stable near~$\bar\theta$
  and \Ptheta recovers its minimizer.
\end{proposition}
\begin{proof}
  By weak continuity, there exists $(x_\theta,\lambda_\theta)$ with  $x_\theta$ feasible for \Qtheta,
  $\lambda_\theta \!\in\! \Lambda_\theta(x_\theta)$,
  such that
  $(x_\theta,\lambda_\theta) \to (\bar{x},\bar\lambda)$ as $\theta\to \bar\theta$.
  It follows that $\mathcal{H}_{\theta}(\lambda_\theta) \to \mathcal{H}_{\bar\theta}(\bar\lambda)$,
  since $f_\theta$ and $g^i_\theta$ depend continuously on~$\theta$.
  Observe that $\mathcal{H}_{\theta}(\lambda_\theta)$ has a 0-eigenvalue since the Lagrange multiplier relationship implies $\mathcal{H}_{\theta}(\lambda_\theta)x_\theta = 0$.
  Also note that $\mathcal{H}_{\bar\theta}(\bar\lambda) \succeq 0$
  as $\bar\lambda$ is dual feasible.
  By assumption $\corank\mathcal{H}_{\bar\theta}(\bar\lambda) \!=\! 1$,
  and hence $\mathcal{H}_{\bar\theta}(\bar\lambda)$ has $N{-}1$ positive  eigenvalues.
  Since $\mathcal{H}_{\theta}(\lambda_\theta) \to \mathcal{H}_{\bar\theta}(\bar\lambda)$, by continuity of eigenvalues, $\mathcal{H}_{\theta}(\lambda_\theta)$ also has $N{-}1$ positive eigenvalues when $\theta\to \bar\theta$.
  Therefore,  $\mathcal{H}_{\theta}(\lambda_\theta) \succeq 0$ and
  $\corank\mathcal{H}_{\theta}(\lambda_\theta)=1$.
  This concludes the proof by \Cref{lem:zerodualitygap}.
\end{proof}

We will now prove that weak continuity holds for~\Qthetaobj at $(\bar{x}, \bar{\lambda})$, with $\bar\lambda=0$.
We first show that $x_\theta^* \rightarrow \bar{x}$ as $\theta\to\bar\theta$,
where $x_\theta^*$ is an optimal solution of \Qthetaobj and $\bar{x}$ is the unique optimal solution of \Qthetabarobj.
We make use of the following well-known lemma, see e.g., \cite[Prop.4.4]{Bonnans2013}.

\begin{lemma}\label{thm:continuity}
  Let $F: S \times \Theta \to \RR$ be a continuous function,
  where $S \subset \RR^N$ is a compact set.
  Then the function $f:\Theta\to \RR$ such that $\theta \mapsto \min_{x \in S} F(x,\theta)$ is continuous.
\end{lemma}

We denote the operator norm of a matrix~$A$ by $\|A\|$ and the Frobenius norm by $\|A\|_F$.

\begin{lemma}\label{thm:primalconvergence}
  For each~$\theta$, let $x_\theta^*$ be an optimal solution of \Qthetaobj.
  Then $x_\theta^*$ converges to $\bar{x}$, up to sign.
\end{lemma}

\begin{proof}
  Set $x\!=\! (x_1,y)$, where $y \!=\! (x_2,\dots,x_N) \!\in\! \RR^{N-1}$. Since at least one $c^i \neq 0$, we may assume
  that $c^1 \!=\! -1$.
  Since $F_{\bar \theta}$ is positive semidefinite and has corank one, we may also
  assume after a change of coordinates that
  $f_{\bar\theta}(x) \!=\! \|y\|^2$ and
  $\bar{x} \!=\! (1,0)$.

  We first show that any point $x= (x_1,y)$,
  feasible for~\Qthetaobj,
  satisfies $\|x\| \!\leq\! \alpha (\|y\| {+} 1)$
  for some constant $\alpha \!\geq\! 1$ that only depends on~$g^1$.
  As $g^1(\bar{x}) \!=\! \bar{x}^\top G^1 \bar{x} {-} 1 \!=\! 0$,
  the top left entry of $G^1$ is a one.
  Hence, we may rewrite the first equation in the form
  $g^1(x) = x_1^2 {-} 2 (v^\top y)x_1 \!-\! (y^\top \tilde{V} y {+} 1)$
  for some $v \!\in\! \RR^{N-1}$, $\tilde{V} \!\in\! \SR^{N-1}$,
  or equivalently,
  $g^1(x) = (x_1 {-} v^\top y)^2 \!-\! (y^\top V y {+} 1)$
  for $V \!:=\! \tilde{V} {+} v v^\top$.
  The equation $g^1(x)\!=\!0$ implies
  $x_1 = v^\top y \!\pm\! \sqrt{y^\top V y {+} 1}$.
  It follows that $|x_1| \!\leq\!
  (\|v\| {+} \|V\|^{1/2})\|y\|\!+\!1$,
  and hence $\|x\| \!\leq\! \alpha (\|y\| {+} 1)$ for
  $\alpha := 1{+}\|v\| {+} \|V\|^{1/2}$.

  From now on we assume that $\theta$ is sufficiently close to $\bar\theta$
  so that $\|F_\theta \!-\! F_{\bar\theta}\| < 1/(8\alpha^2)$.
  We claim that any optimal solution $x_\theta^*$ of
  \Qthetaobj belongs to the compact set
  $S := \{ x{=}(x_1,y) : \|y\| {\leq} 1, \|x\| {\leq} 2\alpha \}$.
  Indeed, any feasible point $x = (x_1,y)$, with $\|y\| {\geq} 1$, has a large cost value:
  \begin{align*}
    f_\theta(x)
    &\geq\,
    f_{\bar\theta}(x) - |f_{\theta}(x)\!-\!f_{\bar\theta}(x)|
    \,\geq\,
    \|y\|^2 - \|F_{\theta} \!-\! F_{\bar\theta}\| \cdot \|x\|^2
    \\
    &\geq\,
    \|y\|^2 - \tfrac{1}{8 \alpha^2} \cdot \alpha^2(\|y\|{+}1)^2
    \,\geq\, (1  - \tfrac{1}{8} \cdot 4)\|y\|^2
    \,\geq\, 1/2.
  \end{align*}
  Such a point $x$ cannot be optimal for \Qthetaobj because $\bar{x}$ has a lower cost:
  \begin{align*}
    f_\theta(\bar{x})
    &\leq\,
    f_{\bar\theta}(\bar{x}) + |f_{\theta}(\bar{x})\!-\!f_{\bar\theta}(\bar{x})|
    \,\leq\,
    \|F_{\theta} \!-\! F_{\bar\theta}\| \cdot \|\bar{x} \|^2
    \,\leq\,
    1/(8 \alpha^2).
  \end{align*}

  As the optimal solutions of \Qthetaobj belong to the compact set~$S$ when
  $\theta$ is sufficiently close to $\bar \theta$,
  we may apply \Cref{thm:continuity}.
  We conclude that
  $f_\theta(x_\theta^*) \to f_{\bar\theta}(\bar{x}) \!=\! 0$
  as $\theta\to\bar\theta$.
  Denoting by $\|\cdot\|_S$ the infinity norm on~$S$, then
  \begin{align*}
    \|y_\theta^*\|^2
    \,=\,
    f_{\bar\theta}(x_\theta^*)
    \,\leq\,
    |f_{\bar\theta}(x_\theta^*) \!-\! f_{\theta}(x_\theta^*) |
    + |f_\theta(x_\theta^*) |
    \,\leq\,
    \|f_{\bar\theta} - f_\theta\|_S
    + |f_\theta(x_\theta^*) |
    \xrightarrow{\theta\to\bar\theta}
    0.
  \end{align*}
  Therefore,
  $y_\theta^* \to 0$ as $\theta \to \bar\theta$.
  Recall that a feasible point $x{=}(x_1,y)$ satisfies
  $x_1 = v^\top y \!\pm\! \sqrt{y^\top V y {+} 1}$.
  It follows that $x_\theta^*$ converges to $\bar{x} = (1,0)$, up to sign.
\end{proof}

We now show that $\lambda_\theta \rightarrow 0$ as $\theta \rightarrow \bar \theta$. For this we rely on the ACQ property of the quadratic variety $\mathbf{X}$ at~$\bar x$.
Let $\nabla{g}$ denote the Jacobian of $g = (g^1, \ldots, g^m)$.

\begin{lemma}\label{thm:dualconvergence}
  Let $x_\theta$ be a critical point of \Qthetaobj.
  Let $\sigma_\theta$ be the $s$-th largest singular value of the Jacobian $\nabla{g}(x_\theta)$, where $s:=\codim_{x_\theta}\! \mathbf{X}$.
  \begin{enumerate}[label=(\roman*)]
    \item
      If $\ACQ{\mathbf{X}}{x_\theta}$ holds, then
      there exists $\lambda_\theta \!\in\! \Lambda_\theta(x_\theta)$ with
      $\|\lambda_\theta\| \leq \frac{1}{\sigma_\theta}\|\nabla f_{\theta}(x_\theta)\|$.
    \item
      If $\ACQ{\mathbf{X}}{\bar{x}}$ holds and $x_\theta \to \bar x$ as $\theta \to \bar\theta$,
      then there exists $\lambda_\theta \!\in\! \Lambda_\theta(x_\theta)$ such that $\lambda_\theta \to 0$.
  \end{enumerate}
\end{lemma}

\begin{proof}
  \emph{(i)}
  Let $J_\theta:= \nabla g(x_\theta)$ be the Jacobian at $x_\theta$.
  If $\ACQ{\mathbf{X}}{x_\theta}$ then $\rank J_\theta \!=\! s$,
  and hence $\sigma_\theta \!>\! 0$.
  Recall that $\Lambda_\theta(x_\theta)$ is the solution space of
  the linear system
  $\lambda^\top J_\theta = -\nabla f_{\theta} (x_\theta)$.
  The linear system has a solution as $x_\theta$ is a critical point.
  Hence,
  $\lambda_{\theta}^\top:= -\nabla f_{\theta}(x_\theta) J_\theta^\dagger$
  is one such solution, where $J_\theta^\dagger$ denotes the pseudo-inverse of~$J_\theta$. The first part of the lemma follows by noticing that $\|J_\theta^\dagger\|=\nobreak 1/{\sigma_\theta}$.

  \emph{(ii)}
  Since ACQ is an open condition, if it holds at $\bar{x}$, then
  it also holds in a neighborhood of $\bar{x}$ and hence at $x_\theta$, for $\theta$ sufficiently close to $\bar \theta$.
  By assumption, $\nabla f_{\bar\theta}(\bar{x})= 2 F_{\bar\theta} \bar x = 0$,
  and by ACQ, $\sigma_{\bar\theta} >0$.
  It follows that $\|\lambda_\theta\| \leq \frac{1}{\sigma_\theta} \|\nabla f_\theta(x_\theta)\| \to 0$ as $\theta \to \bar\theta$,
  and hence $\lambda_\theta\to 0$.
\end{proof}

We are now ready to prove~\Cref{thm:firstresulthom}.
\begin{proof}[Proof of \Cref{thm:firstresulthom}]
We have shown that $\bar\lambda \!=\! 0$ is an optimal solution for
$(D_{\bar\theta})$,
and we are given that $\mathcal{H}_{\bar\theta}(0) \!=\! F_{\bar\theta}$ has corank one.
\Cref{thm:primalconvergence,thm:dualconvergence} show the existence of
$(x_\theta,\lambda_\theta) \in \mathfrak{L}(\theta)$ such that
$x_\theta \rightarrow \bar{x}$ and
$\lambda_\theta \rightarrow 0$ as $\theta \rightarrow \bar\theta$,
where $\bar{x}$ is the unique optimal solution of~\Qthetabarobj.
Then weak continuity also holds,
and the theorem follows from \Cref{prop:stability}.
\end{proof}

\subsection*{Specializations}
Having completed the proof of \Cref{thm:firstresulthom}, we now derive two specializations of it.
The first is the inhomogeneous version which is typically what one sees in applications.

\begin{theorem}\label{thm:firstresultinhom}
  Consider the problem
  \begin{align}\label{eq:Qthetaobjinhom}
    \tag{$\tilde{Q}_\theta^{\mathrm{obj}}$}
    \min_{y}\;\;
    &\tilde{f}_\theta(y)
    \!:=\! y^\top \tilde{F}_{\theta} y
    \!+\!  \tilde{l}_\theta^{\,\top} y \!+\! \tilde{c}_\theta,
    \quad\text{s.t.}\quad
    \tilde{g}^i(y)
    \!:=\! y^\top \tilde{G}^i y \!+\!
    (\tilde{l}^i)^{\!\top} y \!+\! \tilde{c}^i \!=\!0,
    \;\; i \!=\! 1,\ldots,m
  \end{align}
  where $\tilde{f}_\theta,\tilde{g}^i$ are quadratics,
  and $\tilde{f}_\theta$ depends continuously on $\theta$.
  Let $\bar\theta$ be such that $\tilde{f}_{\bar\theta}$ is strictly convex, and its unique unconstrained minimizer
  $\bar{y}$ is the minimizer of $(\tilde{Q}_{\bar\theta}^{\mathrm{obj}})$.
  If $\ACQ{\mathbf{Y}}{\bar{y}}$ holds,
  where $\mathbf{Y}$ is the feasible set,
  then $(\tilde{Q}_{\theta}^{\mathrm{obj}})$ is SDP-stable near~$\bar\theta$
  and $(\tilde{P}_{\theta}^{\mathrm{obj}})$ recovers its minimizer.
\end{theorem}

\begin{proof}
  The homogenized QCQP is:
  \begin{align*}
    \min_{x=(z_0,y)\in \RR^{n+1}}\;\; &f_\theta(x),
    \quad\text{ s.t. }\quad
    z_0^2\!=\!1, \quad
    g^i(x) \!=\!0, \;\; i \!=\! 1,\ldots,m
  \end{align*}
  where $f_\theta, g^i$ are the homogenizations of
  $\tilde{f}_\theta$, $\tilde{g}^i$.
  We need to show that the conditions of~\Cref{thm:firstresulthom} are satisfied.
  We may assume that $f_{\bar\theta}(\bar{x}) = \tilde{f}_{\bar\theta}(\bar{y}) = 0$
  after possibly shifting the cost function.
  Since $\tilde{f}_{\bar\theta}$ is strictly convex,
  then $f_{\bar\theta}$ is convex and its Hessian has corank one.
  It remains to see that $\ACQ{\mathbf{Y}}{\bar{y}}$ implies $\ACQ{\mathbf{X}}{\bar{x}}$.
  This follows from the equations:
  $\rank\nabla g(\bar{x}) = \rank\nabla \tilde{g}(\bar{y}) \!+\!1$,
  $\dim_{\bar{x}}\mathbf{X} = \dim_{\bar{y}}\mathbf{Y}$,
  and $\codim_{\bar{x}}\mathbf{X} = \codim_{\bar{y}}\mathbf{Y}\!+\!1$,
  which are easy to verify.
\end{proof}

As mentioned in the Introduction,
several estimation problems reduce to minimizing the Euclidean distance from a point $\theta$ to a quadratic variety.
As a corollary of \Cref{thm:firstresultinhom}, we get that
such nearest point problems have a tight SDP relaxation when $\theta$ is close enough to the variety.

\begin{corollary}\label{thm:firstresultnearest}
  Consider the problem~\Etheta.
  Let $\bar{\theta} \!\in\! \mathbf{Y}$ be such that $\ACQ{\mathbf{Y}}{\bar\theta}$ holds.
  Then \eqref{eq:nearestpoint} is SDP-stable near~$\bar\theta$ and the SDP recovers the minimizer.
\end{corollary}

The above corollary corresponds to the special
case of \Cref{thm:firstresultinhom} in which the
objective function is $\tilde{f}_\theta(y) := \|y\!-\!\theta\|^2$.
Indeed, this objective is strictly convex,
the minimizer is $\bar{y} \!=\! \bar\theta$ (since $\bar\theta \!\in\! \mathbf{Y}$),
which is also the unconstrained minimizer.
\Cref{thm:firstresultnearest} generalizes the main result of~\cite{Aholt2012},
as will be discussed in \Cref{exmp:triangulation}.

\subsection*{Guaranteed region of SDP-stability}
The \emph{SDP-exact region} of the family of QCQPs~\Qthetaobj
is the set of all parameters~$\theta$ for which the SDP relaxation solves the problem exactly,
i.e., such that $(Q_\theta^{\mathrm{obj}})$ has zero duality gap
and $(P_\theta^{\mathrm{obj}})$ recovers the minimizer.
The SDP-exact region is a semialgebraic set,
provided that the dependence on~$\theta$ is algebraic.
It can be computed exactly using tools from computer algebra~\cite{Cifuentes2019}, though the computation is quite expensive.
Our goal is to find an explicit neighborhood of $\bar\theta$ that is entirely contained in this region.

The next theorem gives a simple criterion to guarantee that a parameter $\theta$ belongs to the SDP-exact region.

\begin{theorem}\label{thm:boundhom}
  Consider the setting of \Cref{thm:firstresulthom}.
  Let $\bar\theta$ be a zero duality gap parameter, and let $\bar x$ be the minimizer.
  Let $\theta$ be another parameter for which $\bar{x}$ is a critical point of~\Qthetaobj,
  and also
  \begin{align*}
      \frac{1}{\sigma_s}\,
      \| \mathcal{G}\|\, \|\nabla f_{\theta}(\bar{x})\|
      + \|F_\theta {-} F_{\bar\theta}\|
      \,<\, \nu_{2}(F_{\bar\theta})
  \end{align*}
  where $s \!=\! \codim_{\bar{x}}\mathbf{X}$,
  $\sigma_s \!=\! \sigma_s(\nabla g(\bar x))$ is the $s$-th smallest singular value of the Jacobian,
  $\nu_2(\,)$ denotes the 2nd smallest eigenvalue,
  and $\| \mathcal{G}\|$ is the operator norm of the linear map $\mathcal{G}(\lambda) \!:=\! \sum_{i=1}^m \lambda_i G^i$.
  Then $(Q_\theta^{\mathrm{obj}})$ has zero duality gap
  and $(P_\theta^{\mathrm{obj}})$ recovers the minimizer.
\end{theorem}

\begin{proof}
  By \Cref{thm:dualconvergence},
  there exists a Lagrange multiplier $\lambda_\theta$ for~\Qthetaobj such that
  $\|\lambda_\theta\| \leq \frac{1}{\sigma_s} \|\nabla{f}_\theta(\bar{x})\| $.
  We will show that $\mathcal{H}_\theta(\lambda_\theta) \succeq 0$,
  and hence \Qthetaobj has zero duality gap by \Cref{lem:zerodualitygap}.
  It suffices to prove that $\nu_2(\mathcal{H}_\theta(\lambda_\theta))>0$,
  as the first eigenvalue is zero.
  Recall that $\lambda \!=\! 0$ is a Lagrange multiplier for~\Qthetabarobj.
  By Weyl's inequality, we have
  \begin{align*}
    \nu_{2}(\mathcal{H}_{\bar\theta}(0)) \!-\! \nu_{2}(\mathcal{H}_\theta(\lambda_\theta))
    \,&\leq\,
    \| \mathcal{H}_{\bar\theta}(0) \!-\! \mathcal{H}_\theta(\lambda_\theta)\|
    \,\leq\,
    \| \mathcal{H}_{\bar\theta}(0) \!-\! \mathcal{H}_\theta(0)\|
    +
    \| \mathcal{H}_\theta(0) \!-\! \mathcal{H}_\theta(\lambda_\theta) \|_F
    \\
    &=\,
    \|F_{\bar\theta} \!-\! F_{\theta}\|
    +
    \|\mathcal{G}(\lambda_\theta)\|_F
    \,\leq\,
    \|F_{\bar\theta} \!-\! F_\theta\|
    +
    \frac{1}{\sigma_s} \|\mathcal{G}\| \, \|\nabla{f}_\theta(\bar{x})\|
  \end{align*}
  where we used that
  $\|\lambda_\theta\| \leq \frac{1}{\sigma_s} \|\nabla{f}_\theta(\bar{x})\| $
  in the last equation.
  Therefore,
  \begin{align*}
    \nu_{2}(\mathcal{H}_\theta(\lambda_\theta))
    \geq\,
    \nu_{2}(\mathcal{H}_{\bar\theta}(0)) -
    \|F_{\bar\theta} - F_\theta\|
    -
    \frac{1}{\sigma_s} \|\mathcal{G}\| \|\nabla{f}_\theta(\bar{x})\|.
  \end{align*}
  Hence, $\nu_{2}(\mathcal{H}_\theta(\lambda_\theta)) > 0$ when
  $\frac{1}{\sigma_s}\,
  \| \mathcal{G}\|\, \|\nabla f_{\theta}(\bar{x})\|
  + \|F_\theta {-} F_{\bar\theta}\|
  \,<\, \nu_{2}(F_{\bar\theta})$.
\end{proof}

We may also provide an analogous theorem for the inhomogeneous setting.

\begin{theorem}\label{thm:boundinhom}
  Consider the setting of \Cref{thm:firstresultinhom}.
  Let $\bar\theta$ be a zero duality gap parameter, and let $\bar y$ be the minimizer.
  Let $\theta$ be another parameter for which $\bar{y}$ is a critical point of~\eqref{eq:Qthetaobjinhom}
  and also
  \begin{align*}
      \frac{1}{\tilde\sigma_s}\,
      \| \tilde{\mathcal{G}}\|\, \|\nabla \tilde{f}_{\theta}(\bar{y})\|
      + \|\tilde{F}_\theta {-} \tilde{F}_{\bar\theta}\|
      \,<\, \nu_{1}(\tilde{F}_{\bar\theta})
  \end{align*}
  where $s \!=\! \codim_{\bar{y}}\mathbf{Y}$,
  $\tilde\sigma_s = \sigma_s(\nabla \tilde{g}(\bar{y}))$ is the $s$-th smallest singular value of the Jacobian,
  $\nu_1(\,)$ denotes the smallest eigenvalue,
  and $\| \tilde{\mathcal{G}}\|$ is its operator norm of the linear map $\tilde{\mathcal{G}}(\lambda) \!:=\! \sum_{i=1}^m \lambda_i \tilde{G}^i$,
  Then $(\tilde{Q}_\theta^{\mathrm{obj}})$ has zero duality gap
  and $(\tilde{P}_\theta^{\mathrm{obj}})$ recovers the minimizer.
\end{theorem}
\begin{proof}
  The proof is identical to \Cref{thm:boundhom},
  except that we need lower bound
  $ \nu_{1}(\tilde{\mathcal{H}}_{\bar\theta}(\lambda_\theta)) $
  instead of
  $ \nu_{2}(\mathcal{H}_{\bar\theta}(\lambda_\theta)) $.
  Observe that
  $ \nu_{1}(\tilde{\mathcal{H}}_{\bar\theta}(\lambda_\theta))
  \leq \nu_{2}(\mathcal{H}_{\bar\theta}(\lambda_\theta)) $
  because of Cauchy's interlacing theorem.
\end{proof}

In the special case of the nearest point problem to a quadratic variety we get a more explicit bound.

\begin{corollary}\label{thm:boundnearest}
  Consider the problem~\Etheta.
  Let $\bar{\theta} \!\in\! \mathbf{Y}$ be such that $\ACQ{\mathbf{Y}}{\bar\theta}$ holds.
  Let $\theta\!\in\!\RR^n$ be a point in the normal space of~$\mathbf{Y}$ at~$\bar\theta$
  (i.e., $\theta{-}\bar\theta$ lies in the row space of $\nabla \tilde g(\bar\theta)$)
  such that
  $ \|\theta {-}\bar\theta \| < \tilde\sigma_s/2 \|\tilde{\mathcal{G}}\|.$
  Then~\Etheta has zero duality gap
  and the SDP recovers the minimizer.
\end{corollary}
\begin{proof}
  Observe that $\bar\theta$ is a critical point of~\Etheta since
  $\nabla f_\theta(\bar\theta) = 2 (\bar\theta{-}\theta)\!^\top\!
  = \lambda^\top \nabla \tilde g(\bar\theta)$ for some~$\lambda$.
  The result follows from \Cref{thm:boundinhom} by noticing that
  $\|\nabla \tilde{f}_\theta(\bar{y})\| = 2\|\theta{-}\bar\theta\|$
  and $\tilde{F}_\theta = \tilde{F}_{\bar\theta} = \id_n$.
\end{proof}

\Cref{thm:boundhom,thm:boundinhom,thm:boundnearest} allows us to obtain inner approximations of the SDP-exact region.
In particular, for the family \Etheta the set
\begin{align}\label{eq:boundnearest}
  \bigcup_{\bar\theta \in \mathbf{Y}}\;
  \Bigl\{ \theta \in N_{\mathbf{Y}}(\bar\theta):
    \|\theta -\bar\theta \| \,<\, \frac{\tilde\sigma_s}{2 \|\tilde{\mathcal{G}}\|}
  \Bigr\}
\end{align}
gives such inner approximation,
where $N_{\mathbf{Y}}(\bar\theta)$ denotes the normal space of $\mathbf{Y}$ at~$\bar\theta$.

\begin{example}
  Consider the twisted cubic from \Cref{exmp:twistedcubic}.
  The SDP-exact region corresponds to the dotted region in \Cref{fig:twistedcubic2}.
  Its boundary is defined by a univariate polynomial of degree~8; see \cite[Ex~6.1]{Cifuentes2019}.
  The darker region in \Cref{fig:twistedcubic2} is the inner approximation from~\eqref{eq:boundnearest}.
  \begin{figure}[htb]
    \centering
    \includegraphics[clip,trim=0 {5pt} 0 {20pt},width=270pt]{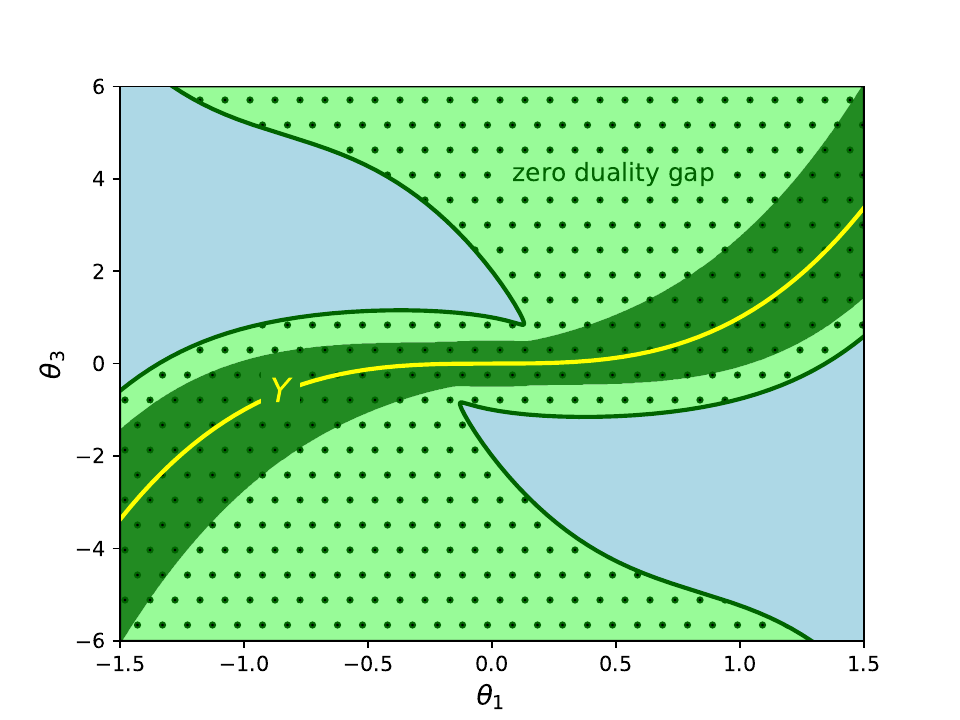}
    \caption{Region of zero duality gap from \Cref{fig:twistedcubic}.
    The darker region is the guaranteed region of zero duality gap (\Cref{thm:boundnearest}).
    }
    \label{fig:twistedcubic2}
  \end{figure}
\end{example}

\section{The General Case}\label{s:main}

We are now ready to consider the general parametrized family of QCQPs \Qtheta,
along with their associated SDPs \Ptheta and \Dtheta.
We assume throughout this section
that $F_\theta, G_\theta^i, c_\theta^i$ are continuous \emph{differentiable} functions of $\theta$,
and that at least one $c_\theta^i \!\neq\! 0$.
Let $\bar\theta \!\in\! \Theta$ be a zero duality gap parameter,
$\bar{x} \!\in\! \RR^N$ be optimal for~\Qthetabar,
and $\bar{\lambda} \!\in\! \RR^m$ be optimal for~$(D_{\bar\theta})$.
We denote $\bar H := \mathcal{H}_{\bar{\theta}}(\bar\lambda)\in\SR^N$ half the Hessian of the Lagrangian at~$\bar\theta$.
Recall that $\bar H\succeq 0$ and $\bar H \bar x = 0$.
We also denote by $\mathbf{X}_\theta:=\{x \!\in\! \RR^N: g_\theta(x) {=} 0 \}$ the (primal) feasible set,
and $\bar{\mathbf{X}} := \mathbf{X}_{\bar\theta}$.

Our first stability result, \Cref{thm:firstresulthom},
relied on two important simplifications:
the feasible region $\mathbf{X}_\theta$ is independent of~$\theta$,
and $\corank F_{\bar\theta} \!=\! 1$.
Our main stability result in this section will relax these assumptions.

Let us first allow the feasible set $\mathbf{X}_\theta$ to depend on~$\theta$.
Under the additional assumption that $\mathbf{X}_\theta$ moves smoothly as a function~$\theta$ (as in \Cref{defn:smoothness}),
it is possible to obtain stability (\Cref{thm:intermediate}).

\begin{definition}\label{defn:smoothness}
  The mapping $\Theta\rightrightarrows \RR^N$, $\theta \mapsto \mathbf{X}_\theta$ is \emph{smooth} nearby $\bar{w} := (\bar\theta,\bar x)$
  if its graph $\mathbf{W} \!:=\! \{(\theta,x): x \!\in\! \mathbf{X}_\theta\}$
  is a smooth manifold nearby $\bar{w}$,
  and $\dim_{\bar{w}} \mathbf{W} \!=\! \dim \Theta \!+\! \dim_{\bar{x}} \bar{\mathbf{X}} $.
\end{definition}

\begin{theorem}\label{thm:intermediate}
  Assume that
  $\bar H$ has corank one,
  $\ACQ{\bar{\mathbf{X}}}{\bar{x}}$ holds,
  and the mapping $\theta \mapsto \mathbf{X}_\theta$ is smooth nearby $(\bar\theta,\bar x)$.
  Then \Qtheta is SDP-stable near~$\theta$
  and \Ptheta recovers the minimizer.
\end{theorem}

We will not prove \Cref{thm:intermediate} directly,
but rather obtain it as a special instance of our main result,
in which we also relax the assumption that $\corank\bar H \!=\! 1$.
Relaxing this assumption is challenging.
We will replace it by two weaker assumptions.

\begin{definition}
  A point $x \in \bar{\mathbf{X}}$ is a \emph{branch point} of~$\bar{\mathbf{X}}$
  with respect to a linear map $\pi: \RR^{N}\to \RR^k$
  if $\ker(\pi) \cap T_{{x}}\bar{\mathbf{X}} \neq 0$,
  where $T_{{x}} \bar{\mathbf{X}} := \ker \nabla {g}_{\bar\theta}({x})$ is the tangent space of~$\bar{\mathbf{X}}$ at~${x}$.
\end{definition}

The next definition is non-standard.

\begin{definition}\label{defn:RS}
  The \emph{restricted Slater} condition holds at~$(\bar x, \bar\lambda)$
  if there exists $\lambda' \!\in\! \RR^m$ such that
  the quadratic function
  $\Psi_{\lambda'}(x):= \sum_i \lambda'_i\, g^i_{\bar\theta}(x)$
  is strictly convex on
  \begin{align*}
    V \,:=\, \{v\in \RR^N: \bar H v \!=\! 0, \bar{x}^\top v \!=\! 0\}
    \,=\, \ker \bar H \cap (\bar x)^\perp,
  \end{align*}
  and also $\nabla \Psi_{\lambda'}(\bar{x}) = 2 \sum_i \lambda_i' G^i_{\bar \theta} \bar{x} = 0$.
\end{definition}

We proceed to present our most general result,
and afterwards we will discuss the new assumptions in detail.

\begin{theorem}[Main result]\label{thm:main}
  Assume that:
  \begin{enumerate}[label=(\roman*)]
    \item \emph{(ACQ)}
      the constraint qualification $\ACQ{\bar{\mathbf{X}}}{\bar{x}}$ holds.
    \item \emph{(smoothness)}
      the mapping $\theta \mapsto \mathbf{X}_\theta$ is smooth nearby $(\bar\theta,\bar x)$.
    \item \emph{(non-branch point)}
      $\bar{x}$ is not a branch point of $\bar{\mathbf{X}}$ with respect to $x \mapsto \bar{H}x$.
    \item \emph{(restricted Slater)}
      the restricted Slater condition holds at~$(\bar x,\bar\lambda)$.
  \end{enumerate}
  Then \Qtheta is SDP-stable near~$\bar{\theta}$
  and \Ptheta recovers the minimizer.
\end{theorem}

\Cref{thm:intermediate} is a special case of \Cref{thm:main},
since if $\corank \bar H \!=\! 1$ then the non-branch point and restricted Slater conditions are satisfied.

\subsection{Discussion of the assumptions}\label{s:generalcase}

\Cref{thm:main} has four assumptions.
The ACQ and smoothness conditions are concerned with the regularity of the feasible set.
ACQ says that the fixed variety~$\bar{\mathbf{X}} = \mathbf{X}_{\bar\theta}$ is smooth nearby~$\bar x$,
while the smoothness condition ensures that the family of varieties $\mathbf{X}_\theta$ behaves well nearby~$\bar\theta$.
We proceed to introduce the remaining two conditions from \Cref{thm:main}.

\subsubsection{Non-branch point}\label{s:branchpoint}

The ACQ property guarantees regularity of the feasible set $\bar{\mathbf{X}}$ nearby~$\bar x$.
In order to guarantee SDP-stability,
we will also need regularity of the image of~$\bar{\mathbf{X}}$ under the linear map $x \mapsto \bar{H} x$.
The next example illustrates the issues we may face when this image is not regular.

\begin{example}\label{exmp:branch}
  Consider the nearest point problem to the plane curve
  $\mathbf{Y}:= \{y \!\in\! \RR^2:  y_2^2 \!=\! y_1^3\}$.
  By introducing the auxiliary variable~$z$
  we can rewrite the problem as the inhomogeneous QCQP
  $ \min \{ \|y\!-\!\theta\|^2 : y_1 \!=\! z^2,\, y_2 \!=\!y_1 z\} $.
  The feasible set of this QCQP is the twisted cubic curve
  $\mathbf{X} := \{(y_1,y_2,z): y_1 {=}z^2,\, y_2 {=} z^3\}$,
  which satisfies ACQ everywhere.
  However, the parameter $\bar\theta \!=\! (0,0)$
  is problematic for the SDP relaxation (e.g., \Pthetabar has multiple solutions).
  The underlying cause is that the optimal solution $\bar y \!=\! (0,0)$ is a singular point of the plane curve~$\mathbf{Y}$.
  As shown in \Cref{fig:cubiccurve2},
  the singular curve~$\mathbf{Y}$ is the projection of $\mathbf{X}$
  under the map $(y_2,y_3,z)\mapsto (y_2,y_3)$.
  This linear map agrees with $x \mapsto \bar H x$ for the point $\bar\theta \!=\! (0,0)$.
  \begin{figure}[htb]
    \centering
    \includegraphics[clip,trim=0 {30pt} 0 {65pt},width=270pt]{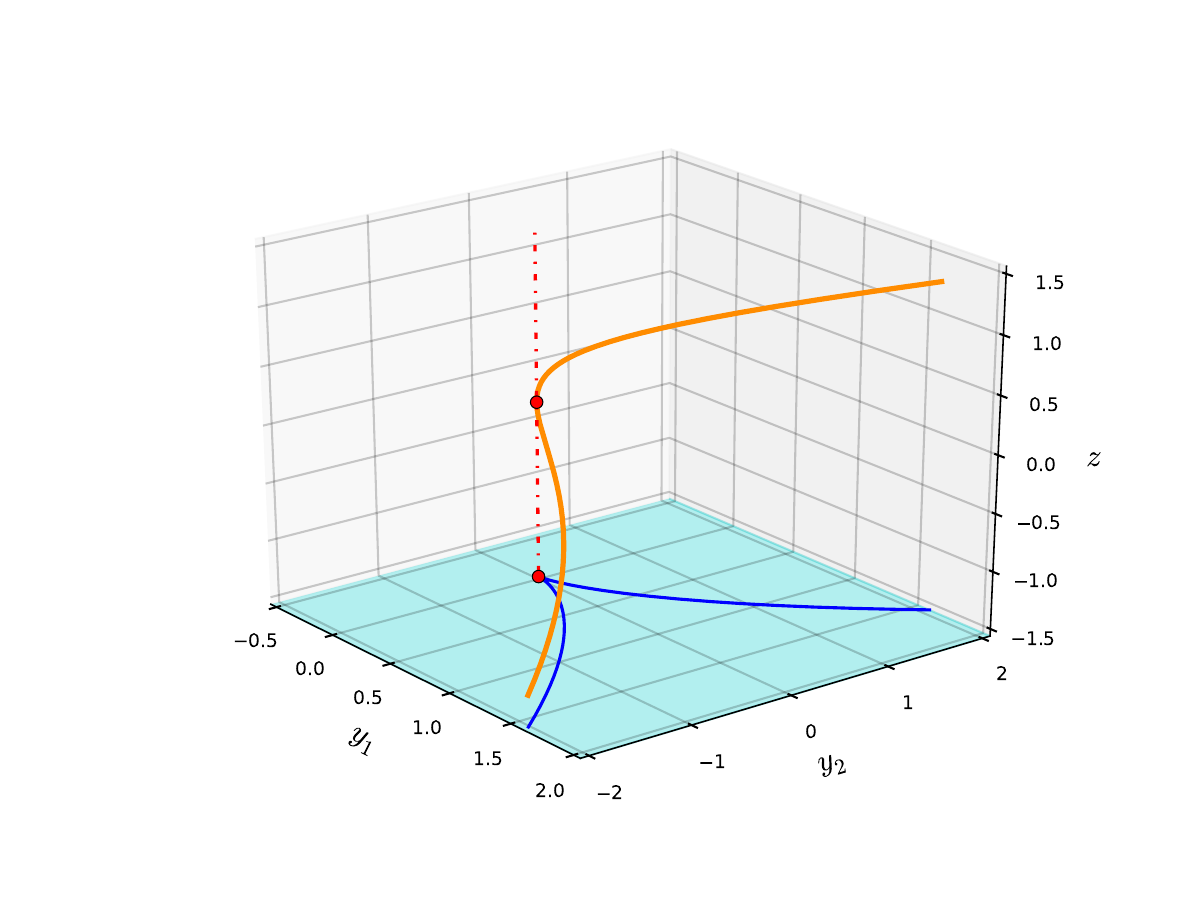}
    \vspace{-10pt}
    \caption{The plane curve $y_2^2\!=\!y_1^3$ is the projection of the twisted cubic. The smooth point $(0,0,0)$ is a branch point of the twisted cubic curve with respect to the projection. It maps to the singular point $(0,0)$ on $y_2^2\!=\!y_1^3$.}
    \label{fig:cubiccurve2}
  \end{figure}
\end{example}

The purpose of the non-branch point assumption is to ensure that $\bar H \bar x$ is a regular point of $\bar H(\bar{\mathbf{X}})$.
More generally, consider a linear map $\pi: \RR^{N} \to \RR^k$ and let $\bar{x}$ be a regular point of~$\bar{\mathbf{X}}$ (i.e., $\ACQ{\mathbf{X}}{\bar x}$ holds).
Then regularity of $\pi(\bar x)$ may only fail when $\bar x$ is a {branch point} of~$\pi$
(i.e., if $\ker(\pi) \cap  T_{\bar{x}}\bar{\mathbf{X}} \!\neq\! 0$).

\begin{example}
  Consider the setup of \Cref{exmp:branch}.
  The reason why the smooth curve $\mathbf{X}$ is projected into the singular curve $\mathbf{Y}$ is that
  $\bar x = (0,0,0)$ is a branch point of $\mathbf{X}$ with respect to $(y_1,y_2,z)\mapsto(y_1,y_2)$.
  This can be observed in \Cref{fig:cubiccurve2},
  by noticing that the tangent line of $\mathbf{X}$ at $\bar x$ is precisely the $z$-axis.
\end{example}

\subsubsection{Restricted Slater}

The previous assumptions (ACQ, smoothness, non-branch point) are all regularity conditions on certain domains (either $\bar{\mathbf{X}}$, $\mathbf{W}$, or $\bar H(\bar{\mathbf{X}})$).
Importantly, all of these regularity assumptions are satisfied for generic problems.
However, they are not enough to guarantee stability of the relaxation,
as illustrated by the following example.

\begin{example}[Non-informative dual]\label{exmp:twistedcubicbad}
  Consider the following homogeneous QCQP:
  \begin{align*}
    \min_{z\in \RR^3, y\in \RR^3}\;
    \|y-\theta z_0\|^2,
    \quad\text{s.t.}\quad
    z_0^2 = 1,\quad
    z_1^2+z_2^2 = 1 , \quad
    (\begin{smallmatrix}z_1 & z_2\end{smallmatrix})
    \bigl(\begin{smallmatrix}z_0 & y_1 & y_2\\y_1& y_2 & y_3\end{smallmatrix}\bigr)
    \!= (\begin{smallmatrix}0 & 0 & 0\end{smallmatrix}).
  \end{align*}
  This is in fact computing the nearest point to the twisted cubic.
  Indeed, assuming that $z_0 \!=\! 1$,
  the matrix
  $\bigl(\begin{smallmatrix}z_0 & y_1 & y_2\\y_1& y_2 & y_3\end{smallmatrix}\bigr)$
  is rank deficient if and only if $(y_1,y_2,y_3)$ lies on the twisted cubic.
  A simple calculation shows that the feasible set is regular everywhere (ACQ holds).
  The feasible set is independent of~$\theta$, so the smoothness assumption also holds.
  As the projection $(z,y)\mapsto y$ leads to the twisted cubic,
  which is smooth, the non-branch point condition also holds.
  Nonetheless, we claim that $\val(D_\theta) \!=\! 0$ for any~$\theta$,
  which means that there is zero duality gap only if $\theta$ lies on the twisted cubic.
  The dual problem~\Dtheta involves five variables
  $\lambda_0,\lambda_1,\dots,\lambda_4$ corresponding to the five constraints,
  and has the form:
  \begin{align*}
    \max_{\lambda\in \RR^5}\;\; -\lambda_0 - \lambda_1
    \qquad\text{s.t.}\qquad
    \biggl(\begin{smallmatrix}
        \lambda_0 + \|\theta\|^2 & b(\lambda)\!^\top & -\theta^\top\\
        b(\lambda) & \lambda_1\! \id_2 & B(\lambda)\!^\top \\
        -\theta & B(\lambda) & \id_3
    \end{smallmatrix}\biggr)
    \succeq 0,
  \end{align*}
  where $b(\lambda) \!\in\! \RR^{2}, B(\lambda) \!\in\! \RR^{3\times 2}$ depend linearly on~$\lambda$.
  Observe that the cost $-\lambda_0 {-} \lambda_1$ is non-positive for any feasible~$\lambda$.
  Indeed, the constraint
  $ \lambda_1\! \id \succeq 0 $
  implies $\lambda_1 \!\geq\! 0$,
  while
  $ \bigl(\begin{smallmatrix}
      \lambda_0 + \|\theta\|^2 & -\theta^\top\\[-1pt] -\theta & \id
  \end{smallmatrix}\bigr) \succeq 0 $
  implies $\lambda_0 \!\geq\! 0$.
  It follows that $\lambda \!=\! 0$ is dual optimal,
  so $\val(D_\theta) \!=\! 0$ for any~$\theta$.
  Note that in this example, $\bar{H} = F_{\bar \theta}$ which has corank $> 1$ since the objective function does not contain $z_1, z_2$.
\end{example}

The previous example shows that in the absence of the corank-one assumption on $\bar H$ the problem may be unstable, even under natural regularity assumptions.
We will show in the next section that the restricted Slater assumption
allows us to pass from $\bar \lambda$ to a different dual optimal solution
$\lambda_t$ such that $\mathcal{H}_{\bar \theta}(\lambda_t)$ has corank one.
This helps to restore stability nearby~$\bar \theta$.

Unlike the previous regularity conditions,
restricted Slater is a semialgebraic condition which is not satisfied generically.
Nonetheless, restricted Slater can be verified efficiently
as it corresponds to the strict feasibility of an SDP
(find $\lambda'$ s.t.\ $\sum_i \lambda'_i G^i_{\bar\theta}\bar{x}=0$, $(\sum_i \lambda'_i G^i_{\bar\theta})|_V \succ 0$).

\begin{example}
  Let us see that restricted Slater is not satisfied in \Cref{exmp:twistedcubicbad}.
  Recall that ACQ holds,
  i.e., $\rank \nabla g_{\bar\theta}(\bar x) = \codim \bar{\mathbf{X}} = 5$.
  The $\lambda'$ from \Cref{defn:RS} must satisfy
  $(\lambda')\!^\top \nabla g_{\bar\theta}(\bar x) = 0$.
  Since $\nabla g_{\bar\theta}(\bar x)$ has full rank,
  then $\lambda'\!=\!0$.
  Hence, restricted Slater does not hold.
\end{example}

\subsection{Proof of the main theorem}
We first see how the vector $\lambda'$ from \Cref{defn:RS} gives us a \emph{direction} along which we can perturb $\bar \lambda$
to obtain a new Hessian with corank one.

\begin{lemma}
  \label{thm:perturbation}
  Let $(\bar{x},\bar{\lambda})$ be primal/dual optimal at~$\bar\theta$,
  and let $\lambda'$ be as in \Cref{defn:RS}.
  Then there is an $\epsilon>0$ such that $\lambda_t:= \bar\lambda + t \lambda'$ is dual optimal and $\corank\mathcal{H}_{\bar\theta}(\lambda_t)=1$ for any $0<t<\epsilon$.
\end{lemma}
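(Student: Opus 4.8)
The plan is to show that adding $t\mu$ to $\bar\lambda$ keeps us dual-feasible and dual-optimal, and simultaneously uses up the extra kernel directions so that the corank drops to one. First I would record what we know about $\mathcal{Q}_{\bar\theta}(\bar\lambda)$: by \Cref{thm:zerodualitygap} it is positive semidefinite, and $\bar x$ lies in its kernel. Its kernel therefore splits as $\R\bar x \oplus V$ where $V = \{v : \mathcal{Q}_{\bar\theta}(\bar\lambda)v = 0,\ \bar x^T v = 0\}$ is exactly the subspace in Assumption~\ref{item:secondorder0}. Write $Q := \mathcal{Q}_{\bar\theta}(\bar\lambda)$ and $H_\mu := \sum_i \mu_i H^i_{\bar\theta} = \nabla^2 \Psi_\mu$, so that $\mathcal{Q}_{\bar\theta}(\lambda_t) = Q + t H_\mu$. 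The hypothesis gives $H_\mu \bar x = \tfrac12\nabla\Psi_\mu(\bar x) = 0$ and $v^T H_\mu v > 0$ for all nonzero $v\in V$.

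Next I would check the two ``dual optimal'' claims. Dual feasibility means $\mathcal{Q}_{\bar\theta}(\lambda_t)\succeq 0$ for small $t$: this is the main eigenvalue-perturbation step (see below). Dual optimality of $\lambda_t$ for \eqref{eq:dualsdptheta} requires that the objective value $\sum_i (\lambda_t)_i b^i_{\bar\theta}$ equals $\val(\mathit{D}_{\bar\theta})$; since the objective is linear and $\bar\lambda$ is optimal, it suffices to show $\sum_i \mu_i b^i_{\bar\theta} = 0$. This follows from $\bar x^T H_\mu \bar x = 0$ (because $H_\mu\bar x = 0$) together with primal feasibility $\bar x^T H^i_{\bar\theta}\bar x + b^i_{\bar\theta} = 0$: summing against $\mu_i$ gives $0 + \sum_i\mu_i b^i_{\bar\theta} = 0$. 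Equivalently one can note $\Psi_\mu(\bar x) = \sum_i\mu_i h^i_{\bar\theta}(\bar x) = 0$ since $\bar x$ is primal feasible, and $\Psi_\mu(\bar x) = \bar x^T H_\mu \bar x + \sum_i\mu_i b^i_{\bar\theta} = \sum_i \mu_i b^i_{\bar\theta}$.

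The core of the argument, and the step I expect to be the main obstacle, is the eigenvalue/corank analysis of $Q + tH_\mu$ for small $t > 0$. I would argue on the orthogonal decomposition $\R^N = \R\bar x \oplus V \oplus W$, where $W$ is the span of the positive eigenvectors of $Q$. On $W$, $Q$ is positive definite, so $Q + tH_\mu$ stays positive definite for $|t|$ small by continuity. The vector $\bar x$ remains in the kernel of $Q + tH_\mu$ for \emph{all} $t$, since $Q\bar x = 0$ and $H_\mu\bar x = 0$; this gives corank at least one. The delicate part is the behavior on $V$: here $Q$ vanishes but $H_\mu$ is positive definite on $V$ by Assumption~\ref{item:secondorder0}, so $tH_\mu$ contributes eigenvalues of order $t$ in the $V$ directions. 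A clean way to make this rigorous is a block/Schur argument: in a basis adapted to $\R\bar x \oplus V \oplus W$, the matrix $Q + tH_\mu$ has a block that is $O(t)$-small but positive definite on $V$ and an $O(1)$ positive-definite block on $W$, with off-diagonal couplings of order $t$; a Schur-complement estimate (or invoking that the eigenvalues of a symmetric matrix are Lipschitz and that the $V$-restricted principal submatrix has eigenvalues $\geq ct$ while couplings are $O(t)$ and cross-terms to $W$ get divided by the $\Theta(1)$ eigenvalues there) shows the $N-1$ nonzero eigenvalues are all strictly positive for $0 < t < \epsilon$. Hence $\corank\mathcal{Q}_{\bar\theta}(\lambda_t) = 1$. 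Combining with the previous paragraph, $\lambda_t$ is dual optimal with corank-one Hessian for all $0 < t < \epsilon$, which is the claim. I would also remark that the same estimate (with $t$ possibly negative) fails in general because $H_\mu$ need only be positive — not negative — definite on $V$, which is why the statement restricts to $0 < t < \epsilon$.
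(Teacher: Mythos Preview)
Your proposal is correct and structurally the same as the paper's proof: both reduce to the hyperplane $\bar x^\perp$ (using $Q\bar x = H_\mu\bar x = 0$) and show that $Q + tH_\mu$ is positive definite there for small $t>0$, giving corank exactly one. The main difference is in how that positive-definiteness is established. The paper simply invokes Finsler's Lemma (\Cref{thm:finsler}): with $A' := Q|_{\bar x^\perp}\succeq 0$ and $B' := H_\mu|_{\bar x^\perp}$, the hypothesis ``$v^T B'v>0$ whenever $A'v=0$, $v\neq 0$'' is exactly Assumption~\ref{item:secondorder0}, and Finsler gives $A'+tB'\succ 0$ for $0<t<\epsilon$ in one line. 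Your Schur-complement/block argument on $\R\bar x\oplus V\oplus W$ is correct but is essentially a hands-on proof of Finsler in this special case; it works, but citing Finsler is cleaner. A second minor difference: you verify dual optimality of $\lambda_t$ by computing $\sum_i\mu_i b^i_{\bar\theta}=0$ directly, while the paper instead notes that $\lambda_t\in\Lambda_{\bar\theta}(\bar x)$ and appeals to \Cref{thm:zerodualitygap}(i--iii); the two are equivalent.
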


The proof of \Cref{thm:perturbation} relies on the following well-known lemma.

\begin{lemma}[Finsler~\cite{Finsler1936}]\label{thm:finsler}
  Let $A, B\!\in\! \SR^n$, $A\!\succeq\! 0$, be such that $v^\top B v \!>\! 0 $ for every nonzero $v$ with $A v \!=\! 0$.
  Then there is an $\epsilon\!>\!0$ such that $A \!+\! t B \succ 0$ for any $0<t<\epsilon$.
\end{lemma}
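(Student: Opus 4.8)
The plan is to prove \Cref{thm:finsler} by a compactness argument on the unit sphere, reasoning by contradiction. Suppose the conclusion fails; then for every $k$ there is $t_k\in(0,1/k)$ with $A+t_kB\not\succ 0$, so there is a unit vector $x_k\in\R^n$ with
\begin{align*}
  x_k^TAx_k + t_k\,x_k^TBx_k \;=\; x_k^T(A+t_kB)x_k \;\le\; 0 .
\end{align*}
Since $\{x:\|x\|=1\}$ is compact, after passing to a subsequence we may assume $x_k\to x$ with $\|x\|=1$; in particular $x\neq0$.

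From the displayed inequality I would extract two facts. First, $A\succeq 0$ gives $x_k^TAx_k\ge 0$, hence $0\le x_k^TAx_k\le -t_k\,x_k^TBx_k\le t_k\|B\|$; letting $k\to\infty$ yields $x^TAx=0$, and since $A\succeq 0$ this forces $Ax=0$ (write $A=C^TC$, so $\|Cx\|^2=x^TAx=0$, whence $Cx=0$ and $Ax=C^TCx=0$). Thus $x$ is a nonzero element of $\ker A$, so the hypothesis of the lemma gives $x^TBx>0$. Second, since $x_k^TAx_k\ge 0$ and $t_k>0$, the displayed inequality forces $x_k^TBx_k\le 0$ for every $k$, so passing to the limit $x^TBx\le 0$. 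This contradicts $x^TBx>0$, and the proof is complete.

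The one point requiring care is the step from $x^TAx=0$ to $Ax=0$: the hypothesis of the lemma speaks of vectors $v$ with $Av=0$, not merely $v^TAv=0$, so one must invoke the elementary fact that the kernel of a positive semidefinite matrix equals the zero set of its quadratic form. Beyond this the argument is routine, and there is no real obstacle.

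If an explicit $\epsilon$ were desired (for instance to feed into the quantitative bounds developed later in the paper), I would instead argue directly via the orthogonal splitting $\R^n=W\oplus W^\perp$ with $W:=\ker A$. On $W^\perp$ the matrix $A$ is injective and positive semidefinite, hence $A|_{W^\perp}\succeq\alpha I$ with $\alpha:=\lambda_{\min}(A|_{W^\perp})>0$, and by hypothesis $B|_W\succeq\beta I$ with $\beta:=\lambda_{\min}(B|_W)>0$. Writing $x=u+w$ ($u\in W^\perp$, $w\in W$) and using $Aw=0$ one gets $x^TAx=u^TAu\ge\alpha\|u\|^2$ and $x^TBx\ge -\|B\|\|u\|^2-2\|B\|\,\|u\|\,\|w\|+\beta\|w\|^2$, so
\begin{align*}
  x^T(A+tB)x \;\ge\; (\alpha-t\|B\|)\|u\|^2 - 2t\|B\|\,\|u\|\,\|w\| + t\beta\|w\|^2 ,
\end{align*}
a quadratic form in $(\|u\|,\|w\|)$ that is positive definite as soon as $\alpha-t\|B\|>0$ and $(\alpha-t\|B\|)\,t\beta-t^2\|B\|^2>0$, both of which hold for all small enough $t>0$; the degenerate cases $A=0$ (where the hypothesis says $B\succ 0$, so $A+tB=tB\succ0$) and $A\succ0$ (where $A+tB\succ0$ for small $t$ by openness of positive definiteness) are immediate. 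Here the only care needed is the cross-term estimate and the $2\times2$ positivity check.
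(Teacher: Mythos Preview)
Your proof is correct. The paper does not prove this lemma at all; it simply states it as a classical result with a citation to Finsler (1936) and uses it as a black box in the proof of \Cref{thm:perturbation}. Your compactness/contradiction argument is a standard and complete proof, and the care you take with the step $x^TAx=0\Rightarrow Ax=0$ is exactly the right point to flag. The alternative quantitative argument you sketch is also correct and would indeed be useful if one wanted an explicit $\epsilon$, though the paper never requires one for this particular lemma.
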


\begin{proof}[Proof of \Cref{thm:perturbation}]
  Since $(\bar{x},\bar{\lambda})$ is primal/dual optimal, it satisfies the three conditions in \Cref{lem:zerodualitygap}.
  We need to show that $(\bar{x},\lambda_t)$ also satisfies these conditions, and that $\corank\mathcal{H}_{\bar\theta}(\lambda_t) \!=\! 1$.
  Let $A:= \mathcal{H}_{\bar\theta}(\bar\lambda)\succeq 0$ and $B:=\sum_i \lambda'_i G^i_{\bar\theta}$.
  Observe that $A\bar x \!=\! 0$ since $\bar\lambda \in \Lambda_{\bar\theta}(\bar{x})$,
  and $B\bar x \!=\! 0$ by definition of~$\lambda'$.
  Since $\mathcal{H}_{\bar\theta}(\lambda_t) \!=\! A {+} t B$
  then
  $\mathcal{H}_{\bar\theta}(\lambda_t)\bar x \!=\! 0$,
  so $\lambda_t\in \Lambda_{\bar\theta}(\bar{x})$.
  It remains to show that $A{+}t B\succeq 0$ and has corank-one.
  We may assume without loss of generality that $\bar{x} = (1,0^{N-1})$.
  Then
  $A = \bigl(\begin{smallmatrix}0&0\\0&A'\end{smallmatrix}\bigr)$,
  $B = \bigl(\begin{smallmatrix}0&0\\0&B'\end{smallmatrix}\bigr)$,
  where $A',B'\in \SR^{N-1}$, $A'\succeq 0$.
  Note that $v^\top B' v > 0 $ for every nonzero $v\in\RR^{N-1}$ with $A' v = 0$.
  From \Cref{thm:finsler} we know that $A' {+} t B' \succ 0$ for all $0<t<\epsilon$.
  Therefore, $A {+} t B \succeq 0$ and has corank one for $0<t<\epsilon$, as wanted.
\end{proof}

Recall that \Cref{prop:stability} shows that $\corank\mathcal{H}_{\bar \theta}(\bar \lambda)\!=\!1$ implies SDP-stability near $\bar \theta$,
as long as the Lagrange multiplier mapping satisfies weak continuity.
\Cref{thm:perturbation} allows us to find a dual optimal solution $\lambda_t$
for which $\corank\mathcal{H}_{\bar \theta}(\lambda_t) \!=\! 1$.
In order to use \Cref{prop:stability} and conclude SDP-stability,
it remains to see that $(\bar x, \lambda_t)$ satisfies weak continuity, which can be obtained via a \emph{stronger} continuity requirement on the original pair~$(\bar x,\bar\lambda)$.
We first recall a well-studied notion of continuity for set-valued-mappings.
We refer to~\cite{Rockafellar2009,Aubin2009} for a detailed introduction to set-valued-mappings.

\begin{definition}[Painlev\'e-Kuratowski continuity]\label{defn:hemicontinuous}
  Let $\mathfrak{F}: \Theta \rightrightarrows \RR^k$ be a {set-valued mapping},
  and assume that each $\mathfrak{F}(\theta)\subset \RR^k$ is nonempty.
  A \emph{selection} of $\mathfrak{F}$ is an assignment $\ell_\theta\in \mathfrak{F}(\theta)$ for each $\theta\in\Theta$.
  The \emph{inner limit} of $\mathfrak{F}$ at $\bar\theta$
  consists of all limits of selections $\{\ell_\theta\}_\theta$, i.e.,
  \begin{align*}
    \liminf_{\theta \to \bar\theta} \mathfrak{F}(\theta)
    &:=\{ \ell\in\RR^k : \exists \ell_\theta \in \mathfrak{F}(\theta) \text{ s.t. } \ell_\theta \xrightarrow{\theta\to\bar\theta} \ell \},
  \end{align*}
  The \emph{outer limit} of $\mathfrak{F}$ at $\bar\theta$
  consists of all cluster points of selections $\{\ell_\theta\}_\theta$, i.e.,
  \begin{align*}
    \limsup_{\theta \to \bar\theta} \mathfrak{F}(\theta)
    &:=\{ \ell\in\RR^k : \exists \theta_i \xrightarrow{i\to\infty} \bar\theta,\; \exists \ell_i \in \mathfrak{F}(\theta_i) \text{ s.t. } \ell_i \xrightarrow{i\to\infty} \ell \}.
  \end{align*}

  The inner and outer limits are always closed sets that sandwich the closure of~$\mathfrak{F}({\bar\theta})$:
  \begin{align*}
    \liminf_{\theta \to \bar\theta} \mathfrak{F}(\theta)
    \subset \closure( \mathfrak{F}({\bar\theta}) )
    \subset \limsup_{\theta \to \bar\theta} \mathfrak{F}(\theta).
  \end{align*}
  $\mathfrak{F}$ is (Painlev\'e-Kuratowski) \emph{continuous}%
  \footnote{
    Although other notions of (set-valued-mapping) continuity exist, they agree for the case of compact valued mappings~\cite{Rockafellar2009}.
    Since the analysis done in this paper is local, we may always restrict the range to some closed ball.
    Hence, we may ignore this distinction in this paper.
  }
  at $\bar\theta$ if
  $
    \mathfrak{F}({\bar\theta})
    = \liminf_{\theta \to \bar\theta} \mathfrak{F}(\theta)
    = \limsup_{\theta \to \bar\theta} \mathfrak{F}(\theta).
  $
\end{definition}

\begin{remark}
  When $\mathfrak{F}$ is defined by continuous functions,
  such as $\mathfrak{L}$, then the equation
  $ \mathfrak{F}({\bar\theta}) = \limsup_{\theta \to \bar\theta} \mathfrak{F}(\theta) $
  always holds \cite[Ex~5.8]{Rockafellar2009}.
  Consequently, in this paper we will focus our attention only on the inner limit.
\end{remark}

\begin{remark}
  Note that weak continuity is simply that $\bar\ell = (\bar x, \bar \lambda)
  \in \liminf_{\theta\to\bar\theta} \mathfrak{L}(\theta)$.
\end{remark}

\begin{example}
  Consider the mapping
  \begin{align*}
    \mathfrak{F}: \RR \rightrightarrows \RR, \qquad
    \theta \mapsto
    \begin{cases}\{0\}, &\text{if }\theta< 0 \\ [-1,1], &\text{if }\theta\geq 0\end{cases}
  \end{align*}
  This mapping is continuous at any $\theta\neq 0$.
  Observe that $\liminf_{\theta \to 0}\mathfrak{F}(\theta) = \{0\}$ and $\limsup_{\theta \to 0}\mathfrak{F}(\theta) = [-1,1]$.
  Thus $\mathfrak{F}$ is not continuous at~$0$.
\end{example}

\begin{definition}
  \label{defn:strongcontinuity}
  The Lagrange multiplier mapping $\mathfrak{L}$ is \emph{strongly continuous}
  at a pair $\bar\ell = (\bar x, \bar\lambda) \in \mathfrak{L}(\bar\theta)$
  if there exists a closed neighborhood $U\ni \bar{\ell}$ such that
  $\mathfrak{L}(\bar{\theta})\cap U \subset \liminf_{\theta\to\bar\theta} \mathfrak{L}(\theta)$,
  or equivalently, such that
  the mapping $\theta \mapsto \mathfrak{L}(\theta)\cap U$ is continuous at~$\bar\theta$.
\end{definition}

The next proposition shows that the three regularity conditions (ACQ, smoothness, non-branch point) guarantee strong continuity of the multipliers.
This proposition can be extended to arbitrary nonlinear programs.
The proof relies on the implicit function theorem, but it also requires some technical definitions from variational analysis.
Hence we postpone the proof to \Cref{s:implicitfunction}.

\begin{proposition}
  \label{thm:aubin}
  Let $\bar{x}$ be a critical point of~\Qthetabar and $\bar\lambda\in\Lambda_{\bar\theta}(\bar{x})$.
  Assume that $\ACQ{\bar{\mathbf{X}}}{\bar{x}}$ holds,
  the mapping $\theta \mapsto \mathbf{X}_\theta$ is smooth nearby $(\bar\theta,\bar x)$,
  and $\bar{x}$ is not a branch point of $\bar{\mathbf{X}}$ with respect to $x \mapsto \bar{H}x$.
  Then strong continuity holds at~$(\bar x, \bar\lambda)$.
\end{proposition}

The proof of \Cref{thm:main} is now completed by the following theorem,
which is the analog of~\Cref{prop:stability}.

\begin{proposition}
  \label{thm:strongcontinuity}
  Let $(\bar{x},\bar{\lambda})$ be primal/dual optimal at~$\bar\theta$,
  such that restricted Slater and strong continuity hold.
  Then \Qtheta is SDP-stable near~$\bar\theta$ and \Ptheta recovers the minimizer.
\end{proposition}

\begin{proof}
  Let $U\ni \bar \ell$ be as in \Cref{defn:strongcontinuity}.
  By \Cref{thm:perturbation}, there is a dual optimal solution $\lambda_t$,
  arbitrarily close to $\bar\lambda$, such that $\mathcal{H}_{\bar\theta}(\lambda_t)$ has
  corank one. Thus, we may assume that $\ell_t:=(\bar{x},\lambda_t) \in U$. We already
  saw that $\lambda_t$ is a Lagrange multiplier of $\bar{x}$. Therefore,
  $\ell_t$
  belongs to $\mathfrak{L}(\bar \theta) \cap U\subset  \liminf_{\theta\to\bar\theta} \mathfrak{L}(\theta)$.
  Since $\corank \mathcal{H}_{\bar\theta}(\lambda_t) \!=\! 1$
  and $\ell_t$ satisfies weak-continuity,
  the theorem follows from \Cref{prop:stability}.
\end{proof}

\subsection{The inhomogeneous version of the main theorem}

Many applications are better suited to an inhomogeneous version of \Cref{thm:main}, which we now derive.

\begin{theorem} \label{thm:maininhom}
  Consider the inhomogeneous family of QCQPs:
  \begin{align}\label{eq:Qthetatilde}
    \tag{$\tilde{Q}_\theta$}
    \min_{y \in \RR^n}\;
    \tilde{f}_\theta(y)
    \!:=\! y^\top \tilde{F}_{\theta} y
    \!+\! 2 \tilde{l}_\theta^{\,\top} y \!+\! \tilde{c}_\theta,
    \quad\text{s.t.}\quad
    \tilde{g}_\theta^i(y)
    \!:=\! y^\top \tilde{G}_\theta^i y \!+\!
    2 (\tilde{l}_\theta^i)^{\!\top} y \!+\! \tilde{c}_\theta^i \!=\!0,
    \;\; i \!=\! 1,\ldots,m.
  \end{align}
  Let $\bar\theta$ be a zero duality gap parameter,
  $(\bar y, \bar \mu)$ be the primal/dual optimal solutions at~$\bar\theta$, and
  \begin{align*}
    \tilde{\mathcal{H}}_{\theta}(\mu) :=
    \tilde{F}_\theta + \sum\nolimits_i \mu_i \tilde{G}_\theta^i \in \SR^{n},
    \quad\;\;
    \tilde{H} := \tilde{\mathcal{H}}_{\bar{\theta}}(\bar\mu),
    \quad\;\;
    \mathbf{Y}_\theta:=\{y \!\in\! \RR^n: \tilde{g}_\theta(y) {=} 0 \},
    \quad\;\;
    \bar{\mathbf{Y}}:= \mathbf{Y}_{\bar\theta}.
  \end{align*}
  Assume that the following conditions hold:
  \begin{enumerate}[label=(\roman*)]
    \item \emph{(ACQ)}
      the constraint qualification $\ACQ{\bar{\mathbf{Y}}}{\bar{y}}$ holds.
    \item \emph{(smoothness)}
      the mapping $\theta \mapsto \mathbf{Y}_\theta$ is smooth nearby $(\bar\theta,\bar y)$.
    \item \emph{(non-branch point)}
      $\bar{y}$ is not a branch point of $\bar{\mathbf{Y}}$ with respect to $y \mapsto \tilde{H}y$.
    \item \emph{(restricted Slater)}
      There exists $\mu' \!\in\! \RR^m$ such that
      the quadratic function $\tilde\Psi_{\mu'}(y) \!:= \sum_i \mu_i' \tilde{g}^i_{\bar\theta}(y)$ is strictly convex on~$U \!:=\! \ker \tilde H$,
      and also $\nabla \tilde\Psi_{\mu'}(\bar{y}) \!=\! 0$.
  \end{enumerate}
  Then \Qthetatilde is SDP-stable near~$\bar{\theta}$
  and the SDP recovers the minimizer.
\end{theorem}

\begin{proof}
  We first argue that we can reduce the problem to the case $\bar y \!=\! 0$.
  If this is not the case, then we may consider the change of variables $w := y {-} \bar y$.
  It is easy to see that that if the conditions from \Cref{thm:maininhom} were satisfied for the original problem,
  then they are also satisfied for the modified problem.
  Hence, we may assume that $\bar y = 0$.

  The homogenized problem has primal variables $x \!=\! (z_0, y)$
  and dual variables $\lambda \!=\! (\lambda_0,\mu)$.
  Note that $\bar x \!=\! (1,0)$ since $\bar y \!=\! 0$.
  The homogeneous and inhomogeneous problems are related as follows:
  \begin{gather*}
    \nabla g_{\bar\theta}(\bar x) =
    \biggl(\begin{matrix} 2 & 0 \\ -\nabla \tilde{g}_{\bar\theta}(\bar y) \bar y & \nabla \tilde{g}_{\bar\theta}(\bar y) \end{matrix}\biggr),
    \qquad\qquad
    \bar{H} =
    \biggl(\begin{matrix} 0 & 0 \\ 0 & \tilde{H} \end{matrix}\biggr),
    \\
    \nabla \Psi_{\lambda}(\bar x) =
    \biggl(\begin{matrix} 2\lambda_0  \!-\! \nabla \tilde{\Psi}_{\mu}(\bar y) \bar y & \nabla \tilde{\Psi}_{\mu}(\bar y) \end{matrix}\biggr),
    \qquad
    \nabla^2 \Psi_{\lambda} =
    \biggl(\begin{matrix} 2\lambda_0 & *\\ * & \nabla^2 \tilde{\Psi}_{\mu} \end{matrix}\biggr).
  \end{gather*}
  We proceed to show that \Qthetatilde satisfies the conditions \emph{(i)---(iv)} in \Cref{thm:maininhom}
  if and only if its homogenization \Qtheta satisfies the conditions in \Cref{thm:main}.
  \begin{enumerate}[label=(\roman*)]
    \item
      $\rank\nabla g_{\bar \theta}(\bar{x}) = \rank\nabla \tilde{g}_{\bar \theta}(\bar{y}) {+}1$,
      $\dim_{\bar{x}}\bar{\mathbf{X}} = \dim_{\bar{y}}\bar{\mathbf{Y}}$,
      $\codim_{\bar{x}}\bar{\mathbf{X}} = \codim_{\bar{y}}\bar{\mathbf{Y}} {+}1$.

    \item
      $\mathbf{X}_\theta = \{(z_0,y): z_0 \!=\! \pm 1,\, z_0 y \!\in\! \mathbf{Y}_\theta\}$
      consists of two disjoint copies of~$\mathbf{Y}_\theta$.
    \item
      Notice that
      $ \ker \nabla g_{\bar\theta}(\bar x)
      = \{0\} \times \ker \nabla \tilde{g}_{\bar\theta}(\bar y) $
      and that
      $ \ker \bar{H} = \RR \times \ker \tilde{H} $.
      The result follows from the equation
      $\ker \nabla g_{\bar \theta}(\bar x) \cap \ker \bar{H}
      = \{0\} \!\times\! (\ker \nabla \tilde{g}_{\bar \theta} (\bar y) \cap \ker \tilde{H})$.

    \item
      Let $\mathring{\mu} := (0,\mu)$ and $\mathring{u} := (0,u)$ denote the vectors obtained by prepending a zero.
      Note that
      \begin{align*}
        \qquad\nabla{\Psi}_{\mathring\mu}(\bar x) \!=\! 0
        \iff\! \nabla\tilde{\Psi}_{\mu}(\bar y) \!=\! 0,
        \quad
        \ker \bar{H} \cap (\bar x)^\perp\! = \{0\} \!\!\times\! \ker \tilde{H},
        \quad
        \mathring{u}^\top \nabla^2{\Psi}_{\mathring\mu}\, \mathring{u}=
        u^\top \nabla^2\tilde{\Psi}_{\mu} u.
      \end{align*}
      Let $\mu' \!\in\! \RR^m$ such that $\nabla \tilde\Psi_\mu(\bar{y}) \!=\! 0$
      and also $u^\top \nabla^2\tilde\Psi_{\mu'}\, u \!>\! 0$ for all nonzero $u \!\in\! \ker \tilde H$.
      By the above equations we have that
      $\lambda' \!:=\! \mathring\mu'$ satisfies $\nabla \Psi_{\lambda'}(\bar{x}) \!=\! 0$
      and also $v^\top \nabla^2\Psi_{\lambda'}\, v \!>\! 0$ for all nonzero $v \in \ker \tilde H \cap (\bar x)^\perp$.
      The converse implication is similar.
      \qedhere
  \end{enumerate}
\end{proof}

We conclude this section with an illustration of \Cref{thm:maininhom} on
yet another QCQP on the twisted cubic, but this time, the objective function is nonconvex.

\begin{example}
  Given $\theta \!\in\! \RR$,
  consider minimizing
  $y_3^2 {+} y_1 y_3 {-} 2 \theta y_1^2 {-} 2 y_3$
  on the cubic curve defined by $y_3 \!=\! y_1^3$.
  We may rewrite the problem as a QCQP
  by introducing the auxiliary variable $y_2 \!=\! y_1^2$:
  \begin{align}
    \min_{y_1,y_2,y_3 \in \RR} \quad
    y_3^2 {+} y_1 y_3 {-} 2 \theta y_1^2 {-} 2 y_3,
    \quad\text{ s.t. }\quad
    y_2 \!-\! y_1^2 = y_3 \!-\! y_1 y_2 = y_2^2 \!-\! y_1 y_3 = 0.
  \end{align}
  Consider the nominal parameter $\bar\theta \!=\! 1$,
  at which we have
  $\val(Q_{\bar\theta}) \!=\! \val(D_{\bar\theta}) \!=\! -2$.
  The optimal solutions are $\bar{y}=(1,1,1)$ and $\bar\mu = (-2,0,1)$.
  We claim that the assumptions from \Cref{thm:maininhom} hold,
  and hence the problem is SDP-stable nearby~$\bar\theta$.

  The variety~$\bar{\mathbf{Y}}$ is the twisted cubic,
  so ACQ holds everywhere.
  The smoothness assumption also holds as the constraints are independent of~$\theta$.
  We proceed to the non-branch point condition.
  Note that
  \begin{align*}
    \nabla \tilde{g}(\bar{y}) =
    \left(\begin{smallmatrix}
      -2 & 1 & 0 \\
      -1 & -1 & 1 \\
      -1 & 2 & -1
    \end{smallmatrix}\right),
    \qquad
    \tilde{L}_{\bar\theta}(y,\bar\mu)
    = (y_2{-}1)^2 + (y_3{-}1)^2 - 2,
    \qquad
    \tilde H =
    \nabla^2 \tilde{L}_{\bar\theta} =
    \biggl(\begin{smallmatrix}0&0&0\\0&1&0\\0&0&1\end{smallmatrix}\biggr).
  \end{align*}
  Then $\ker \nabla \tilde{g}(\bar{y})$ is spanned by $(1,2,3)$,
  and $\ker \tilde H$ by $(1,0,0)$,
  so $\bar y$ is not a branch point.
  Finally, let us see that restricted Slater holds with $\mu' \!=\! (-1,1,1)$.
  The corresponding quadratic function is
  $
  \tilde\Psi_{\mu'}(y) := y_1^2{-}y_1 y_2{+}y_2^2{-}y_1 y_3{-}y_2{+}y_3
  $
  and one can check that
  $\nabla \tilde\Psi_{\mu'}(\bar y) \!=\! 0$.
  Recall that $U := \ker \tilde H$ is the line spanned by $(1,0,0)$.
  So the restriction of $\tilde\Psi_{\mu'}$ to $U$ is the univariate function $y_1 \mapsto y_1^2$, which is strictly convex.
  We have verified the four conditions needed in \Cref{thm:maininhom}.
\end{example}

\section{Applications}\label{s:applications}
We now apply our results to an array of applications.
Recall that the motivation for this paper was to provide a theoretical understanding of the observation that many statistical estimation problems exhibit zero duality gap under low noise. Our results
provide a uniform framework for understanding these, and other stability results.

\subsection{Estimation problems with a strictly convex objective}\label{s:estimationeasy}

We first consider two nearest point problems to which we apply \Cref{thm:firstresultnearest}. This requires checking the ACQ property for which
we rely on the following well-known fact
(see e.g., \cite[\S14]{Harris2013}).

\begin{lemma}\label{thm:ACQradical}
  Let $g\subset\RR[x]$ be a polynomial system with variety~$\mathbf{X}$.
  If the ideal $\langle g\rangle$ is radical then ACQ holds for each smooth point of~$\mathbf{X}$.
\end{lemma}

\begin{example}[Triangulation]\label{exmp:triangulation}
  In computer vision we represent
  3D points by vectors $z = (z_1 , z_2 , z_3, 1) \in \RR^4$,
  2D points (images) by vectors $u = (u_1, u_2) \in \RR^2$,
  and cameras by matrices $P \in \RR^{3\times 4}$.
  In the triangulation problem we are given
  $\ell$~cameras $P_j \in \RR^{3 \times 4}$
  and noisy images $\hat{u}_j\in \RR^2$ of an unknown 3D point $z\in\RR^4$,
  and the goal is to recover~$z$.
  Assuming i.i.d.\ Gaussian noise, the MLE is given by the nearest point problem:
  \begin{align*}
    \min_{u\in \mathbf{U}}\quad &\sum_{j=1}^\ell\|u_j-\hat{u}_j\|^2,
    \quad\text{where}\quad
    \mathbf{U} := \{ u \in (\RR^2)^\ell: \exists z\in \PP^3
    \text{ s.t. } u_{j} = \Pi P_j {z}\; \text{ for } 1\leq j\leq \ell\},
  \end{align*}
  where $\Pi : \RR^3\to \RR^2$, $(y_1,y_2,y_3) \mapsto (y_1/y_3,y_2/y_3)$ is the
  dehomogenization map.
  The variety $\mathbf{U}$~is known as the \emph{multiview variety}.
  If either $\ell\!=\!2$,
  or $\ell\!\geq\! 4$ and the camera centers are not coplanar,
  then
  $$
  \mathbf{U} = \{u\in (\RR^2)^\ell: \tilde{g}_{ij} (u_i,u_j) = 0, \;1\leq i<j\leq \ell\},
  $$
  where $\tilde{g}_{ij}$ are quadratic equations known as the epipolar constraints~\cite{Heyden1997}.
  This description of~$\mathbf{U}$ gives a QCQP.
  The epipolar equations define a radical ideal~\cite{Heyden1997},
  so ACQ holds at each smooth point (\Cref{thm:ACQradical}).
  In particular, ACQ holds generically on the variety.
  Then, by \Cref{thm:firstresultnearest},
  the SDP relaxation of this QCQP solves the problem exactly (generically) under small noise.

  The above SDP relaxation was considered in~\cite{Aholt2012},
  where they also showed exactness under low noise.
\end{example}

\begin{example}[Tensor PCA]
  Consider vectors $\{v^j \!\in\! \RR^{n_j} \}_{j=1}^\ell$,
  and let $\theta \in \RR^{n_1\times\cdots\times n_\ell}$ be the tensor with entries
  $\theta_{i_1 i_2 \dots i_\ell} = v^1_{i_1} v^2_{i_2} \dots v^\ell_{i_\ell} + \zeta_{i_1 i_2 \dots i_\ell}$,
  where $\zeta_{i_1 i_2 \dots i_\ell}$ are random i.i.d.\ Gaussian variables.
  Hence, $\theta$ is a noisy estimate of the \emph{rank-one tensor}
  $v^1\!\otimes\! \cdots \!\otimes\! v^\ell$.
  The {tensor PCA problem},
  also known as rank-one tensor approximation,
  consists of recovering the vectors $\{v^j\}_j$ from the tensor~$\theta$.
  The maximum likelihood estimator is obtained by solving the nearest point problem:
  \begin{align}\label{eq:tensor}
    \min_{y\in \mathbf{Y}}\quad &\|y-\theta\|^2,
    \quad \text{ where }\quad
    \mathbf{Y} := \{ y \in \RR^{n_1\times\cdots\times n_\ell}: \rank y = 1 \}.
  \end{align}
  This is a QCQP since $\mathbf{Y}$ is the quadratic {\em Segre variety}
  \cite[\S9]{Harris2013},
  cut out by the $2 \!\times\! 2$ minors of the matrix flattenings of~$y$.
  This ideal of minors is radical,
  so by \Cref{thm:ACQradical} we have that ACQ holds everywhere except at the origin (the only singular point).
  Therefore, by \Cref{thm:firstresultnearest},
  the problem is SDP-stable nearby any nonzero $\bar\theta \in Y$.
  In other words,
  the problem~\eqref{eq:tensor} is solved exactly by its SDP relaxation in the low noise regime.
  The same holds for symmetric tensors,
  in which case the associated variety is the \emph{Veronese},
  which is also defined by a radical quadratic ideal.

  It is possible to derive an equivalent QCQP formulation of~\eqref{eq:tensor} involving less variables (so the SDP is cheaper).
  The idea is to eliminate the last component $v^\ell$
  from the tensor $y = v^1\!\otimes\! \cdots \!\otimes\! v^\ell$,
  obtaining a new problem in terms of the unit norm tensor $x$ proportional to
  $v^1\!\otimes\! \cdots \!\otimes\! v^{\ell-1}$.
  This leads to the homogeneous QCQP:
  \begin{align}\label{eq:tensor2}
  \min_{x \in \mathbf{X}}\quad
  \frac{1}{2}\, \sum_{k=1}^{n_\ell}\sum_{I\neq J} (x_I \theta_{J,k} - x_J \theta_{I,k})^2,
  \quad\text{ where }\quad
  \textbf{X} = \{ x \in \RR^{n_1\times \dots \times n_{\ell-1}}\!:
  \rank x \!=\! 1,\; \|x\|^2 \!=\! 1\},
  \end{align}
  and where $I,J$ range over all tuples in $[n_1] \times \dots \times [n_{\ell-1}]$.
  One can check that
  the assumptions of \Cref{thm:firstresulthom} are satisfied,
  so the SDP relaxation of \eqref{eq:tensor2} is also exact under low noise.

  SDP relaxations for the Tensor PCA problem were first studied in~\cite{Nie2014},
  where they introduced the idea of eliminating one component for efficiency.
  Their derivation relies on a parametrization of $\textbf{X}$,
  but the SDP is the same as the one obtained with the implicit description.
  No exactness results were known before.
\end{example}

We now show applications of \Cref{thm:firstresultinhom}.
Note that this theorem has three assumptions:
at the nominal parameter $\bar \theta$,
the objective is strictly convex,
ACQ holds for the minimizer,
which is also the global minimum of the objective function.
Since in the problems below the objective function is a squared loss function, and since in the noiseless case the objective value is zero, the last condition is always satisfied.
Thus, we will only check strict convexity and ACQ.

\begin{example}[$SO(d)$ synchronization]\label{exmp:rotationsync}
  Consider $n{+}1$ objects in $\RR^d$,
  whose orientation is described by matrices $R_i \!\in\! SO(d)$,
  where $SO(d)$ is the \emph{special orthogonal group}.
  Let $G=(V,E)$ be a graph with vertex set $V=\{0,\dots,n\}$.
  In the $SO(d)$ synchronization problem we are given noisy estimates
  $\hat{R}_{ij} \approx R_j R_i^\top$ of the relative orientation among pairs $ij \in E$,
  and the goal is to recover the matrices~$R_i$.
  Under an appropriate noise model~\cite{Rosen2016},
  the MLE is given by the following least-squares problem:
  \begin{align}\label{eq:SOd}
    \min_{R_1,\dots,R_n\in SO(d)}\,
    \sum_{ij\in E}\|R_j \!-\! \hat{R}_{ij}R_i\|_F^2, \quad\;\;
    SO(d) := \{R \!\in\! \RR^{d\times d}: R^\top R \!=\! \id_d,\; \det(R) \!=\! 1 \},
  \end{align}
  where we assume that $R_0 := \id_n$ is the reference point.
  This problem is parametrized by $\theta = (\hat{R}_{ij})_{ij\in E}$.
  To obtain a QCQP, we can replace $SO(d)$ by the \emph{orthogonal group}~$O(d)$:
  \begin{align}\label{eq:Od}
    \min_{R_1,\dots,R_n\in O(d)}\; \sum_{ij\in E}\|R_j - \hat{R}_{ij}R_i\|_F^2, \qquad O(d) := \{R \in \RR^{d\times d}: R^\top R = \id_d\}.
  \end{align}
  Note that~\eqref{eq:SOd} and~\eqref{eq:Od} have the same minimizer in the low noise regime,
  as $SO(d)^n$ is a connected component of $O(d)^n$.
  Consider the SDP relaxation of the QCQP~\eqref{eq:Od}.
  The objective function is strictly convex by \Cref{lem:connectedgraph} below,
  and ACQ is satisfied everywhere since the variety is smooth and the ideal is radical (\Cref{thm:ACQradical}).
  Thus problem \eqref{eq:Od} (and hence \eqref{eq:SOd}) is solved exactly by its Lagrangian relaxation in the low noise regime.
  We point out that problem \eqref{eq:Od} has a very special structure,
  so its Lagrangian dual admits a more concise representation than a typical QCQP, see \cite[\S4.2]{Rosen2016}.
\end{example}

\begin{lemma}\label{lem:connectedgraph}
  Let $G=(V,E)$ be a connected graph, let $x_0\in \RR^k$, and let $L_{ij}:\RR^k\to \RR^k$ be invertible linear maps for $ij\in E$.
  Then the function $f(x):=\sum_{ij\in E}\|x_j - L_{ij}x_i\|^2$, where $x=(x_1,\dots,x_n)\in (\RR^k)^n$, is strictly convex.
\end{lemma}
\begin{proof}
  We may assume that the reference point $x_0 \!=\! 0$ after a possible affine transformation.
  Since ${f}(x)$ is convex and homogeneous, it suffices to see that ${f}(x)\!=\!0$ implies $x\!=\!0$.
  If ${f}(x)\!=\!0$ then $x_j\!=\!L_{ij}x_i$ for each $ij\!\in\! E$.
  Since ${x}_0\!=\!0$ and $G$ is connected it is clear that each $x_i$ must be zero.
\end{proof}

  An alternative QCQP formulation for the $SO(3)$ synchronization problem can be obtained by representing rotations with quaternions~\cite{Fredriksson2012}.
  The same analysis as above shows that the corresponding SDP relaxation is exact in the low noise regime, as was observed experimentally in~\cite{Fredriksson2012}.
  Tightness results for $SO(3)$ synchronization similar to ours were obtained in~\cite{Eriksson2018,Wang2013}.

\begin{example}[$SE(d)$ synchronization]
  A natural extension of the above problem is to replace rotation matrices by elements of the \emph{special Euclidean group} $SE(d)$.
  Given a graph $G=(V,E)$, and $\hat{R}_{ij}\in \RR^{d\times d}, \hat{u}_{ij} \in \RR^d$ for $ij\in E$, the problem is
  \begin{align}\label{eq:SEd}
    \min_{R_i\in SO(d),\, u_i \in \RR^d}\quad
    \sum_{ij\in E} \|R_j \!-\! \hat{R}_{ij}R_i\|_F^2
    + \|u_j \!-\! u_i \!-\! R_i \hat{u}_{ij}\|^2,
  \end{align}
  where $R_0 := \id_d, u_0 := 0$.
  As before, we can replace $SO(d)$ with $O(d)$ to obtain a QCQP, and consider its SDP relaxation.
  An argument similar to \Cref{lem:connectedgraph} shows that the objective function is strictly convex,
  and thus the Lagrangian relaxation solves problem~\eqref{eq:SEd} exactly under low noise.

  SDP relaxations for $SE(d)$ synchronization have received considerable attention in past years and similar exactness results have been derived \cite{Rosen2016,Wang2013}.
\end{example}

\begin{example}[Orthogonal Procrustes]
  Given $n,k,m_1,m_2 \!\in\! \NN$ and matrices
  $A \!\in\! \RR^{m_1\times n}$, $B\!\in\! \RR^{m_1\times m_2}$, $C\!\in\!\RR^{k\times m_2}$,
  the weighted orthogonal Procrustes problem, is
  \begin{align}\label{eq:procrustes}
    \min_{X\in \RR^{n\times k}}\quad &\|A X C - B\|_F^2,
    \quad \text{ s.t. }\quad X^\top X = \id_k.
  \end{align}
  The above is a QCQP parametrized by $\theta=(A,B,C)$.
  ACQ holds everywhere since the variety (the {\em Stiefel manifold}) is smooth and the ideal is radical.
  The objective function is strictly convex as long as the linear map $X \mapsto A X C$ is injective.
  In such cases \Cref{thm:firstresultinhom} guarantees that the SDP relaxation is exact under low noise.

  Problem~\eqref{eq:procrustes} may have several local optima, and thus local methods may fail~\cite{Viklands2006,Chu2001}.
  The above SDP relaxation was considered in~\cite{Cifuentes2015}.
\end{example}

\subsection{Estimation problems with degenerate objective}\label{s:estimationhard}

The previous applications had strictly convex objective functions,
and hence the theorems from \Cref{s:firstresult} were sufficient to analyze them.
The results from \Cref{s:main} can be used to study much more general estimation problems.
A preprint of this paper first appeared on the arXiv in 2017 and has since spawned a number of follow ups~\cite{Cifuentes2018,Zhao2019, Cifuentes2019}. For example \cite{Cifuentes2018,Zhao2019} used our \Cref{thm:main} to establish stability for a number of estimation problems from computer vision, control theory, and symbolic computation. As can be seen from these papers, verifying the assumptions of \Cref{thm:main} is non-trivial. So, we briefly mention a few of these applications (without proofs) 
to give the reader with a sense of how \Cref{thm:main} can be used. 
Several of these are nearest point problems of the form \eqref{eq:prob2} from the Introduction.


\begin{example}[Essential matrix estimation~\cite{Zhao2019}]
  A calibrated camera is described by a matrix of the form
  $P \!=\! (R \,|\, t)$, with $R \!\in\! SO(3)$, $t \!\in\! \RR^3$.
  Consider images $(u_j)_{j=1}^\ell \subset \RR^2$ and $(u_j')_{j=1}^\ell \subset \RR^2$ of the same 3D~points under two calibrated cameras~$P,P'$.
  The relationship between the image sets $(u_j)$ and $(u_j')$ is encoded by the essential matrix~$E $.
  This is a special $3 \!\times\! 3$ matrix that depends on $P,P'$.
  The problem of estimating an essential matrix $E$ based on noisy estimates~$(\hat{u}_j),(\hat{u}_j')$ of the images can be
  formulated as follows:
  \begin{align*}
    \min_{E\in\RR^{3\times 3},\, t\in\RR^3}\quad
    \sum_{j=1}^\ell\|\hat{v}_j^\top E\, \hat{v}_j'\|^2,
    \quad \text{ s.t. }\quad
    E E^\top = [t]_{\times} [t]_{\times}^\top\,,
    \quad
    t^\top t = 1,
  \end{align*}
  where $\hat{v}_j,\hat{v}_j' \in \RR^3$ are the homogenizations of
  $\hat{u}_j,\hat{u}_j'$, obtained by adding a last coordinate equal to one,
  and $[t]_{\times} \!\in \RR^{3 \times 3}$ is the skew-symmetric matrix, whose non-zero entries are
  $\pm t_i$, representing cross product with~$t$.
  The above is a homogeneous QCQP in variables $E,t$,
  parametrized by $\theta = (\hat{v}_j,\hat{v}_j')_{j=1}^\ell$.
  The objective is degenerate (its Hessian has corank~$3$) as it does not involve~$t$,
  so we cannot apply the results from \Cref{s:firstresult}.
  Nonetheless, it is shown in~\cite{Zhao2019} that the assumptions of \Cref{thm:main} hold,
  and hence this problem is solved exactly by the SDP relaxation under low noise.
\end{example}

The {sum-of-squares} (SOS) method provides a hierarchy of SDP relaxations of a QCQP, in which the first level is the Lagrangian relaxation.
Our methods can be used to analyze the SDP relaxations at any level,
as will be discussed in more detail in \Cref{s:sos}.
The remaining examples illustrations this.

\begin{example}[Approximate system realization~\cite{Cifuentes2018}]\label{ex:noisyhankel}
  Consider a discrete linear time invariant (LTI) system of order~$k$.
  Let $y=(y_1,y_2,y_3,\dots,y_n)$ be the truncated impulse response of the system,
  and assume that the signal $y$ is corrupted by Gaussian noise.
  The approximate realization problem consists of recovering the transfer function of the $k$-th order system from the corrupted signal~$\hat y$.
  The MLE can be found by solving the following least squares problem:
  \begin{align*}
    \min_{y\in \RR^n \!,\, z \in \RR^{k+1}} \;\|y - \hat{y}\|^2,
    \quad\text{ s.t. }\quad
    z^\top H_{k+1}(y) = 0,
  \end{align*}
  where $H_{k+1}(y)$ is the $(k{+}1) \!\times\! (n{-}k)$ Hankel matrix with the entries of~$y$.
  In particular, the transfer function of the system can be recovered from the optimal~$z$.
  The above problem is an inhomogeneous QCQP in variables $y,z$,
  parametrized by $\theta \!=\! \hat{y}$.
  Unfortunately, its Lagrangian relaxation does not provide any information about the value of the QCQP (c.f., \Cref{exmp:twistedcubicbad}).
  On the other hand,
  the second level of the SOS hierarchy always solves the problem exactly in the low noise regime.
  This result is proved in \cite{Cifuentes2018} by relying on \Cref{thm:main}
  (as the objective is degenerate, we cannot use \Cref{thm:firstresulthom}).
\end{example}

\begin{example}[Camera resectioning~\cite{Cifuentes2018}]
  In the uncalibrated resectioning problem we are given $\ell$~points $z_j\!\in\! \RR^4$
  and noisy images $\hat{u}_j\!\in\!\RR^2$ under an unknown camera $P \!\in\! \RR^{3\times 4}$,
  and the goal is to recover~$P$.
  Assuming i.i.d.\ Gaussian noise, the MLE is given by
  \begin{align*}
    \min_{P\in\RR^{3\times 4}, u_j\in\RR^2}\quad
    \sum_{j=1}^\ell\|u_j-\hat{u}_j\|^2,
    \quad \text{ s.t. }\quad
    u_j = \Pi P z_j\; \text{ for } j \in [\ell].
  \end{align*}
  where $\Pi:\RR^2\to \RR^2$, $(y_1,y_2,y_3) \mapsto (y_1/y_3,y_2/y_3)$.
  Though this problem looks similar to triangulation (\Cref{exmp:triangulation}),
  it is significantly harder because we cannot easily eliminate the auxiliary variables~$P$.
  The above problem can be formulated as a QCQP in the variables $P,u_j$ after clearing denominators.
  As before, the Lagrangian relaxation is non-informative.
  However, the SDP relaxation in the second level of the SOS hierarchy is always exact in the low noise regime,
  as shown in \cite{Cifuentes2018} by using \Cref{thm:main}.
\end{example}

\begin{example}[Approximate~GCD~\cite{Cifuentes2018}]
  Let $f_1 \!\in\! \RR[t]_{n_1}$, $f_2 \!\in\! \RR[t]_{n_2}$
  be univariate polynomials of degrees $n_1,n_2$,
  and let $g \!\in\! \RR[t]_{d}$ be their GCD, of degree~$d$.
  Here $\RR[t]_k$ denotes the vector space of univariate polynomials in $t$ of degree at most $k$.
  The approximate GCD problem consists of estimating the degree~$d$ polynomial $g$ from noisy estimates $\hat{f}_1,\hat{f}_2$.
  The MLE under Gaussian noise is:
  \begin{align*}
    \min_{f_1\in\RR[t]_{n_1},\,f_2\in\RR[t]_{n_2}, z\in \RR^{k}}\quad
    &\|f_1-\hat{f}_1\|^2 + \|f_2-\hat{f}_2\|^2,
    \quad \text{ s.t. }\quad
    z^\top \operatorname{Syl}_d(f_1,f_2) = 0,
  \end{align*}
  where $k := n_1 {+}n_2{-}2d$,
  and $\operatorname{Syl}_d(f_1,f_2)$ is the $k \times(k{+}d{-}1)$ Sylvester matrix,
  which is filled with the coefficients of $f_1,f_2$;
  see e.g.,~\cite{Kaltofen2006}.
  The GCD polynomial $g$ can be read from the vector~$z$.
  It is shown in \cite{Cifuentes2018}, using \Cref{thm:main},
  that the SDP relaxation in the second level of the SOS hierarchy is exact under low noise.
\end{example}

\section{SDP Stability in Polynomial Optimization}\label{s:sos}

Although the main focus of this paper has been the Lagrangian relaxation of a QCQP,
the techniques from this paper can be used in much greater generality.
In this section, we illustrate how to use our theorems to analyze SDP relaxations of polynomial optimization problems arising from the sum-of-squares (SOS) method.

We first consider unconstrained polynomial optimization.
Let $\RR[z]_{2d}$ be the vector space of multivariate polynomials of degree at most~$2d$ in variables $z=(z_1,\ldots,z_n)$.
Consider the parametric family of unconstrained polynomial optimization problems:
\begin{align}\label{eq:pop}
  \tag{$\mathit{POP}_{\theta}$}
  \min_{z \in \RR^n}\; p_\theta(z),
  \qquad\text{ where }
  p_\theta \in \RR[z]_{2d}
  \text{ depends continuously on }\theta.
\end{align}
We will analyze the stability of the SOS relaxation of $(\textit{{POP}\textsubscript{\texttheta}})$.

We briefly review the SOS method.
A polynomial $f\in\RR[z]_{2d}$ is SOS if it can be written in the form $f(z) = \sum_i f_i(z)^2$ for some $f_i\in\RR[z]_d$.
The set
\begin{align*}
  \Sigma_{n,2d} := \{ f \in \RR[z]_{2d} : f(z) \text{ is SOS }\}
\end{align*}
is a closed convex cone in $\RR[z]_{2d}$.
The \emph{SOS relaxation} of~\eqref{eq:pop} is
\begin{align}\label{eq:sos}
  \tag{$\mathit{SOS}_{\theta}$}
  \max_{\gamma \in \RR}\; \gamma,
  \quad \text{ s.t. }\quad
  p_\theta(z)-\gamma \in \Sigma_{n,2d}.
\end{align}
Since not all nonnegative polynomials are SOS, we have that $\textup{val}(\textit{{POP}\textsubscript{\texttheta}}) \geq
\textup{val}(\textit{{SOS}\textsubscript{\texttheta}})$.
The relaxation $(\textit{{SOS}\textsubscript{\texttheta}})$ can be solved
efficiently with an SDP,
and it is \emph{tight} at~$\theta$ if
$\val(\mathit{SOS}_\theta)=\val(\mathit{POP}_\theta)$.
If the minimizer of $p_\theta$ is unique, it might also be possible to recover it from the relaxation.

Assume that the SOS relaxation is tight for a specific parameter~$\bar\theta$.
We investigate the behavior of $(\mathit{SOS}_\theta)$ when $\theta$ is close to $\bar\theta$.
As the following example shows, stability is not to be taken for granted.

\begin{example}
  For the polynomial $p_\theta(z):= z_1^4z_2^2 + z_1^2z_2^4 + \theta z_1^2z_2^2\in \RR[z]_6 $ we have:
  \begin{align*}
  &\theta \geq 0 \quad\implies\quad
  \val(\mathit{POP}_\theta) = \val(\mathit{SOS}_\theta) = 0,
  \\
  &\theta < 0 \quad\implies\quad
  \val(\mathit{POP}_\theta) = \tfrac{1}{27}\theta^3
  \text{ and \eqref{eq:sos} is infeasible}.
  \end{align*}
  Hence the relaxation is not stable nearby $\bar\theta = 0$.
\end{example}

We will use \Cref{thm:firstresulthom} to establish a result that guarantees tightness of \eqref{eq:sos} near~$\bar \theta$.
The hypothesis for this theorem is a geometric condition
involving $p_{\bar \theta}$ and
the SOS cone $\Sigma_{n,2d}$, which we first explain.
Let $\bar z$ be the minimizer of $p_{\bar\theta}$.
Consider the following linear subspaces of $\RR[z]_{2d}$:
\begin{align*}
  H_{\bar z}:=\{ f \!\in\! \RR[z]_{2d}: f(\bar z) \!=\! 0 \},
  \qquad
  L_{\bar z}:=\{ f \!\in\! \RR[z]_{2d}: f(\bar z) \!=\! 0,\, \nabla f(\bar{z}) \!=\! 0\}.
\end{align*}
The subspace $H_{\bar z}$ is defined by a single linear equation,
so it is a hyperplane.
The intersections of the SOS cone with both subspaces agree.
Indeed, if $f(\bar z) \!=\! 0$ and $f$ is SOS,
then we must have that $\nabla f(\bar z) \!=\! 0$.
Let $K_{\bar z}$ be the (exposed) face of the SOS cone given by this
intersection:
\begin{align*}
  K_{\bar z}
  \,:=\, \Sigma_{n,2d} \cap H_{\bar z}
  \,=\, \Sigma_{n,2d} \cap L_{\bar z}.
\end{align*}
Observe that $p_{\bar \theta} \!-\! \bar{\gamma} \in K_{\bar z}$,
where $\bar\gamma := p_{\bar \theta}(\bar z)$ is the optimal value of $(\mathit{POP}_\theta)$.

\begin{theorem} \label{thm:sos}
  Let $\bar\theta$ be such that $\bar\gamma:=\val(\mathit{POP}_{\bar\theta})=\val(\mathit{SOS}_{\bar\theta})$ and there is a unique minimizer~$\bar z$.
  Consider the face $K_{\bar z}$ of the cone~$\Sigma_{n,2d}$, from above.
  If $p_{\bar\theta}-\bar\gamma$ lies in the relative interior of $K_{\bar z}$,
  then the relaxation \eqref{eq:sos} is tight and recovers the minimizer whenever $\theta$ is close enough to~$\bar\theta$.
\end{theorem}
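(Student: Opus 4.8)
The plan is to recast \eqref{eq:sos} and the moment relaxation dual to it as the dual pair attached to a suitable QCQP, and then apply \Cref{thm:sufficientcondition}. First I would perform the standard degree-$d$ moment lifting: introduce one variable $x_\alpha$ for each monomial $z^\alpha$ with $|\alpha|\le d$, stack them into $x=(x_\alpha)\in\R^N$ with $N=\binom{n+d}{d}$, and impose the homogeneous quadratic constraints $x_0^2=1$ together with $x_\alpha x_\beta-x_{\alpha'}x_{\beta'}=0$ for every $\alpha+\beta=\alpha'+\beta'$. Writing $p_\theta$ as a quadratic form $x^{T}G_\theta x$ in these variables (the ambiguity in this choice is absorbed by the moment relations), we obtain a parametric QCQP~\eqref{eq:primaltheta} whose constraints do not depend on $\theta$. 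Its feasible set is exactly $\{\pm m_d(z):z\in\R^n\}$, where $m_d(z)$ denotes the vector of monomials of degree $\le d$, so $\val(P_\theta)=\val(\mathit{POP}_\theta)$; moreover $(P^*_\theta)$ is the degree-$d$ moment relaxation, and since the dual objective equals $-\lambda_0$ while $\mathcal{Q}_\theta(\lambda)$ ranges over all Gram matrices of $p_\theta+\lambda_0$, the Lagrangian dual $(D_\theta)$ is precisely \eqref{eq:sos}. In particular $\val(P_{\bar\theta})=\bar\gamma=\val(D_{\bar\theta})$, i.e.\ there is zero-duality-gap at $\bar\theta$, and $\bar x:=m_d(\bar z)$ is the unique primal minimizer up to sign.

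Next I would translate the interiority hypothesis into Assumption~\ref{item:corankone}. For a dual optimal $\bar\lambda$ at $\bar\theta$ one has $\lambda_0=-\bar\gamma$, and $Q:=\mathcal{Q}_{\bar\theta}(\bar\lambda)$ is a positive semidefinite Gram matrix of $f:=p_{\bar\theta}-\bar\gamma$; since $f\ge 0$ and $f(\bar z)=0$ we get $\bar x^{T}Q\bar x=0$, hence $Q\bar x=0$. Conversely every positive semidefinite Gram matrix of an SOS polynomial vanishing at $\bar z$ kills $\bar x$, so the Gram map restricts to a surjective linear map from the spectrahedral cone $\{Q\succeq 0:Q\bar x=0\}$ onto the face $K_{\bar z}$ (the gradient condition in $L_{\bar z}$ being automatic for SOS polynomials vanishing at $\bar z$). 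A surjective linear map of convex cones sends relative interior onto relative interior, and the relative interior of $\{Q\succeq 0:Q\bar x=0\}$ consists of the matrices of corank one; hence $f$ lies in the relative interior of $K_{\bar z}$ if and only if some dual optimal $\bar\lambda$ has $\corank\mathcal{Q}_{\bar\theta}(\bar\lambda)=1$, which is exactly Assumption~\ref{item:corankone}. Fixing such a $\bar\lambda$, the Hessian $\bar Q$ has corank one, so by \Cref{thm:sufficientconditionACQ} Assumptions~\ref{item:secondorder0} (restricted Slater) and~\ref{item:branch} (not a branch point) hold automatically.

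It remains to check R1 and R2 of \Cref{thm:sufficientcondition}. Condition R2 (smoothness) is immediate: the constraints are independent of $\theta$, so $\mathcal{W}=\Theta\times\bar X$, which is a smooth manifold nearby $(\bar\theta,\bar x)$ of the required dimension provided $\bar X$ is smooth at $\bar x$ -- and near $\bar x$ the branch $\{x_0=1\}$ of $\bar X$ is the graph $\{m_d(z)\}$ of a polynomial embedding, hence a smooth $n$-manifold. For R1 ($\ACQ_{\bar X}(\bar x)$), note that the quadratic moment relations generate a radical ideal (the toric ideal of the Veronese), so ACQ holds at the smooth point $\bar x$ by \Cref{thm:ACQradical}. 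Now \Cref{thm:sufficientcondition} applies and yields zero-duality-gap for all $\theta$ close to $\bar\theta$, together with a unique optimal $x_\theta x_\theta^{T}$ of $(P^*_\theta)$; translating back, $\val(\mathit{SOS}_\theta)=\val(\mathit{POP}_\theta)$, and reading the minimizer $z_\theta$ of $p_\theta$ off from $x_\theta=m_d(z_\theta)$ (equivalently, from the rank-one moment matrix) shows that the relaxation recovers it.

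I expect the crux to be the second paragraph: making rigorous the equivalence between ``$p_{\bar\theta}-\bar\gamma$ in the relative interior of the face $K_{\bar z}$'' and the existence of a corank-one Lagrangian Hessian, i.e.\ Assumption~\ref{item:corankone}. This needs care with the facial structure of $\Sigma_{n,2d}$ under evaluation at $\bar z$, with the surjectivity of the Gram map onto $K_{\bar z}$, and with relative interiors of spectrahedral cones; it must also be accompanied by the check that the chosen set of quadratic moment relations is rich enough that $(D_\theta)$ really is \eqref{eq:sos} rather than a weaker relaxation.
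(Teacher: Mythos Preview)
Your proposal is correct and follows the same overall architecture as the paper: lift \eqref{eq:pop} to a QCQP on the Veronese variety, identify the Lagrangian dual with \eqref{eq:sos}, convert the relative-interior hypothesis into a corank-one Gram/Hessian (via the surjective linear image of the face $\{Q\succeq 0:\,Q\bar x=0\}$ onto $K_{\bar z}$, exactly as in \Cref{thm:gram}), and check ACQ using that the Veronese ideal is radical and the variety is smooth away from the origin.

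The one genuine difference is which stability theorem you invoke. You fix an arbitrary Gram matrix $G_\theta$ of $p_\theta$, let the corank-one matrix arise as the \emph{Lagrangian Hessian} $\mathcal{Q}_{\bar\theta}(\bar\lambda)$, and appeal to the general \Cref{thm:sufficientcondition} (using \Cref{thm:sufficientconditionACQ} to dispose of \ref{item:secondorder0} and \ref{item:branch}). The paper instead exploits the freedom in the Gram representation up front: it \emph{chooses} $Q_\theta$ so that $Q_{\bar\theta}$ is the corank-one matrix (\Cref{thm:gram}) and $Q_\theta$ depends continuously on $\theta$ (via $Q_\theta:=\phi^\dagger(p_\theta-p_{\bar\theta})+\bar Q$). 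Then the dehomogenized objective $q_{\bar\theta}(y)$ is strictly convex with minimizer $\bar y$, and the simpler \Cref{thm:sdpexactquadratic} applies directly, bypassing \ref{item:continuity} and \ref{item:branch} altogether. Your route buys conceptual uniformity with the rest of the paper (everything through \Cref{thm:sufficientcondition}); the paper's route buys economy, since it only needs the easy case of \Cref{s:smallmultipliers} and avoids touching the machinery of \Cref{s:generalcase}. Both are valid, and the ``crux'' you flag---that the Gram map carries relative interiors to relative interiors---is precisely the content of \Cref{thm:gram}.
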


\begin{proof}
  We may assume WLOG that $\bar\gamma \!=\! 0$,
  and thus $p_{\bar\theta}\in K_{\bar z}$.
  In order to use our methods, we need to rephrase~\eqref{eq:pop} as a QCQP.
  Let
  \begin{align*}
    x := ( z^\alpha)_{\alpha \in J} \in (\RR[z]_{d})^N,
    \quad\text{ where }\quad
    J := \{ \alpha \in \NN^n: \textstyle\sum_i \alpha_i \leq d\},
    \quad
    N := \binom{n+d}{d},
  \end{align*}
  be the vector with all monomials in $\RR[z]$ of degree at most~$d$.
  Note that any $p \!\in\! \RR[z]_{2d}$ can be written in the form
  $p(z) \!=\! x^\top F x$ for some $F \!\in\! \SR^N$.
  Such an $F$ is called a \emph{Gram matrix} of~$p$.
  Moreover, $p$ is SOS if and only if it has a positive semidefinite Gram matrix.

  Let $\bar{x}\!\in\!\RR^N$ be given by evaluating each of the monomials in $x$ at~$\bar{z}$. We first argue that if
  $p_{\bar\theta} \in \interior K_{\bar z}$, then
  \begin{enumerate}[label=(\roman*)]
    \item $p_{\bar\theta}$ has a Gram matrix $\bar{F}$ such that
      $\bar F \!\succeq\! 0$, $\bar{F} \bar{x}\!=\! 0 $, and $\corank \bar F \!=\! 1$.
    \item $F_\theta := \phi^\dagger (p_\theta\!-\!p_{\bar\theta}) + \bar{F}$
      is a Gram matrix of~$p_\theta$,
      where $\phi^\dagger$ is the pseudo-inverse of the linear map
      $ \phi: \SR^N \to \RR[z]_{2d}$,\, $A \mapsto x^\top A x$.
  \end{enumerate}

  Observe that $F$ is a Gram matrix of $p$ if and only if $\phi(F)\!=\!p$.
  Hence \emph{(ii)} follows by checking that $\phi(F_\theta) \!=\! p_\theta$.
  Also note that $K_{\bar z} \!=\! \phi(S)$, where
  $ S := \{F \!\in\! \SR^N: F \!\succeq\! 0, F\bar{x} \!=\! 0\}$.
  Indeed, $F \succeq 0$ if and only if $f(z) \!:=\! x^\top F x \!\in\! \Sigma_{n,2d}$, in which case $\nabla f(\bar z) \!=\! 2 F \bar x$,
  and hence $F \bar{x} \!=\! 0$ if and only if $\nabla f(\bar z)\!=\!0$.
  Since linear maps preserve relative interiors of convex sets,
  then $\interior K_{\bar z} = \phi(\interior S)$.
  Then \emph{(i)} follows by noticing that $\interior S = \{ F \!\in\! S: \corank F {=} 1\}$.

  The above properties provide a family of Gram matrices $F_\theta$ that depends continuously on $\theta$.
  Thus the parametric optimization problem~\eqref{eq:pop} can be phrased as
  \begin{align*}
    \min_{x \in \mathbf{X}} \; x^\top F_\theta x,
    \;\;\quad \text{where} \quad\;\;
    \mathbf{X} := \{ x\!=\!(z^\alpha)_{\alpha\in J}:  z\in \RR^n\} \subset \RR^N.
  \end{align*}
  The above is a QCQP since $\mathbf{X}$, the Veronese variety, is defined by quadratic equations:
  \begin{align*}
    \mathbf{X} := \{ x \in \RR^N :
      x_0 \!=\! 1,\;
      x_{\alpha_1} x_{\alpha_2} \!=\! x_{\beta_1} x_{\beta_2} \;\;
      \forall \alpha_1,\alpha_2,\beta_1,\beta_2 \in J
      \text{ s.t. } \alpha_1\!+\!\alpha_2 \!=\! \beta_1\!+\!\beta_2
    \}.
  \end{align*}
  Moreover, the relaxation~\eqref{eq:sos} coincides with the Lagrangian dual of the above QCQP.

  By construction we know that $F_{\bar\theta}\!\succeq\! 0$, $F_{\bar{\theta}}\bar{x}\!=\!0$, $\corank F_{\bar{\theta}}\!=\!1$.
  It is known that the Veronese variety is smooth and its ideal is radical.
  Hence ACQ holds everywhere by \Cref{thm:ACQradical}.
  The result now follows from \Cref{thm:firstresulthom},
\end{proof}

Our techniques can also be applied to constrained polynomial optimization, as we briefly explain.
Consider the parametric family of polynomial optimization problems:
\begin{equation}\label{eq:pop2}
  \tag{$\mathit{POP}_{\theta}^{\textup{con}}$}
  \begin{aligned}
    \min_{x\in\RR^N}\quad p_\theta(z),
    \qquad\text{s.t.}\qquad
    q_\theta^i(z) \!=\!0,\;\; i\!=\!1,\dots,m,
  \end{aligned}
\end{equation}
where $p_\theta,q_\theta^i \in \RR[z]_{2d}$
depend continuously on $\theta$.
Given $D \geq d$,
the $D$-th order {SOS relaxation} is:
\begin{align}\label{eq:sos2}
  \tag{$\mathit{SOS}_{\theta}^{\textup{con}}$}
  \max_{\gamma \in \RR,\; h^i \in \RR[z]}\; \gamma,
  \qquad\text{s.t.}\qquad
  p_\theta(z)-\gamma - \sum_i h^i(z)\, q^i_\theta(z) \,\in\, \Sigma_{n,2D},
\end{align}
where the optimization variables are the scalar $\gamma\!\in\!\RR$ and the polynomials $h^i \!\in\! \RR[z]_{2D-2d}$.
The relaxation~\eqref{eq:sos2} can be efficiently solved with an SDP.
We can use \Cref{thm:main} to analyze the stability of this relaxation.
In order to do that, we phrase \eqref{eq:pop2} as a QCQP, as before.
Namely, express the polynomials $p_\theta(z),q_\theta^i(z)$ as quadratic functions $f_\theta(x),g_\theta^i(x)$ in~$x$,
where $x \in \RR^{\binom{n+D}{D}}$ lies in the Veronese variety.

\section{Conclusion}

\Cref{thm:firstresulthom,thm:main} guarantee SDP-stability for
a parametrized family of QCQPs near a nominal parameter $\bar \theta$ at which
there is zero duality gap.
These results provide a uniform framework within which to understand and explain several previously observed occurrences of zero duality gap under low noise.
While the conditions of \Cref{thm:firstresulthom} are relatively easily to check,
\Cref{thm:main} requires technical assumptions that are more involved.
We believe that all the requirements in \Cref{thm:main} are necessary for stability.

Our results strongly depend on the quadratic nature of the problem. Although the conditions in  \Cref{thm:firstresultinhom,thm:main} make sense for general nonlinear programs, they only guarantee stability of zero duality gap in the QCQP case. For instance, the problem
$\min\{ x_1^2 \!+\! x_2^2 \!+\! \theta x_2^4 : x_2{=}0 \}$
satisfies the assumptions of \Cref{thm:firstresultinhom} for $\bar\theta{=}0$,
but the duality gap is not stable (it is zero only at~$\bar\theta$).

Our stability theorems not only guarantee zero duality gap nearby $\bar\theta$,
but also that the relaxation recovers the minimizer of the problem.
To do so, our theorems assume that the primal and dual optimal values are achieved at the nominal parameter~$\bar\theta$.
We leave for future work to investigate SDP stability in settings where the optimal values are not achieved.

As we have seen several times in this paper, polynomial optimization problems
can be formulated as QCQPs by adding extra variables. If these QCQPs satisfy the conditions of our theorems then they exhibit stability. As illustrated by \Cref{exmp:twistedcubic,exmp:twistedcubicbad}, different QCQP formulations of the same problem might have different stability properties.
It is natural to ask how to choose the {\em best} QCQP formulation. The SOS method gives a systematic procedure for constructing a hierarchy of QCQP formulations,
and it is optimal in the sense that it includes all valid relations up to a certain degree.
The Lagrangian relaxations studied in this paper correspond to the first SOS relaxation. However, the methods of this paper are general and
they can be used to study higher order SOS relaxations also.
In particular, \Cref{thm:main} was used in~\cite{Cifuentes2018} to analyze the second order SOS relaxation of some of the examples in Section~\ref{s:estimationhard}.

In this paper we have focused on equality-constrained QCQPs.
As is standard in nonlinear programming,
the results can be extended to account for inequality constraints
as long as the active constraints are preserved by the perturbation.
A comprehensive treatment of the inequality-constrained case is left for future work.

\subsection*{Acknowledgments}
We would like to thank Dmitriy Drusvyatskiy for many helpful conversations, and for suggesting the use of the implicit function theorem to analyze our problem.
Diego Cifuentes was in the Laboratory for Information and Decision Systems during the development of this paper.
Rekha Thomas was partially supported by the
NSF grant DMS-1719538.
This work was done in part while Pablo Parrilo was visiting the Simons Institute for the Theory of Computing.
It was partially supported by the DIMACS/Simons Collaboration on Bridging Continuous and Discrete Optimization through NSF Grant CCF-1740425.
This work was also supported in part by the Air Force Office of Scientific Research through AFOSR Grants FA9550-11-1-0305 and the National Science Foundation through NSF Grant CCF-1565235.

\appendix

\section{Stability of Lagrange multipliers}\label{s:implicitfunction}

The goal of this section is to show a generalization of \Cref{thm:aubin} to parametric nonlinear programming.
Let $\Theta \subset \RR^d$ be the parameter space,
let $f : \Theta\times\RR^N\to \RR$ and $g : \Theta\times\RR^N\to \RR^m$
be continuously differentiable,
and such that $f_\theta,g_\theta$ are twice continuously differentiable.
Consider the parametric family of nonlinear programs
\begin{align*}
  \min_{x\in \mathbf{X}_\theta} \quad f_\theta(x),
  \quad\text{ where }\quad
  \mathbf{X}_\theta := \{ x\in \RR^N: g_\theta(x) = 0 \}.
\end{align*}
Let $L_\theta(x,\lambda) := f_\theta(x) \!+\! \lambda^\top g_\theta(x)$ be the Lagrangian function, and let
\begin{equation*}
  \begin{aligned}
    \mathfrak{L}: \Theta \rightrightarrows \RR^N\times \RR^m,
    \qquad
    \theta\mapsto\,
    &\{(x,\lambda): g_\theta(x) \!=\! 0 ,\; \nabla_{x}L_\theta(x,\lambda) \!=\! 0\}
  \end{aligned}
\end{equation*}
be the Lagrange multiplier mapping.
Let $(\bar{x},\bar\lambda)\in \mathfrak{L}(\bar\theta)$ be a Lagrange multiplier pair at the nominal parameter~$\bar\theta$.
We denote $\bar{H} \!:=\! \frac{1}{2}\nabla^2_{xx} L_{\bar\theta}(\bar x,\bar\lambda)$.
We will derive conditions that ensure local stability of~$\mathfrak{L}$ nearby~$\bar\theta$.
The notion of stability we use is the \emph{Aubin property}; see~\cite{Dontchev2009,Rockafellar2009}.

\begin{definition}[Aubin property]
  Let $\mathfrak{F}: \RR^d \rightrightarrows \RR^n$ be a {set-valued mapping}.
  $\mathfrak{F}$~has the \emph{Aubin property}
  at $\bar{p} \!\in\! \RR^d$ for $\bar{y} \!\in\! \RR^n$
  if $\bar{y} \!\in\! \mathfrak{F}(\bar{p})$
  and there is a constant $\kappa \!\geq\! 0$
  and neighborhoods $U \!\ni\! \bar{y}, V \!\ni\! \bar{p}$ such that
  \begin{align*}
    \mathfrak{F}(p')\cap U \,\subset\,
    \mathfrak{F}(p) + \kappa\, |p'\!-\!p| \,\mathcal{B}
    \quad\mbox{ for all } p',p\in V,
  \end{align*}
  where $\mathcal{B} \subset \RR^n$ denotes the unit ball.
\end{definition}

The following is the main result of this section.

\begin{theorem}[Stability of Lagrange multipliers]\label{thm:aubin1}
  Let $(\bar{x},\bar\lambda)\in \mathfrak{L}(\bar\theta)$.
  Assume that $\ACQ{\bar{\mathbf{X}}}{\bar{x}}$ holds,
  the mapping $\theta \mapsto \mathbf{X}_\theta$ is smooth nearby $(\bar\theta,\bar x)$,
  and $v^\top \bar H v \neq 0$ for all nonzero $v \in T_{\bar x}(\mathbf{X}_{\bar\theta}) := \ker \nabla g_{\bar\theta}(\bar x)$.
  Then the mapping $\mathfrak{L}$ has the Aubin property at~$\bar\theta$ for $(\bar{x},\bar\lambda)$.
\end{theorem}

\begin{remark}
  Similar stability results about Lagrange multipliers appear in the literature (e.g.,~\cite{Bonnans2013,Fiacco1990}).
  However, we were not able to find a result that suited our needs.
  Previous results either have stronger assumptions (LICQ/MFCQ) or only imply outer semicontinuity of~$\mathfrak{L}$.
\end{remark}

The last assumption of \Cref{thm:aubin1} says that the quadratic form $v^\top \bar H v$ is nondegenerate on the tangent space $T_{\bar x}(\mathbf{X}_{\bar\theta})$.
This is similar, but weaker, to the second order sufficient condition for optimality,
which states that $v^\top \bar H v$ is strictly convex on $T_{\bar x}(\mathbf{X}_{\bar\theta})$.

Let us see that \Cref{thm:aubin1} implies \Cref{thm:aubin} from \Cref{s:main}.

\begin{lemma}\label{thm:continuouslocally}
  Let $\mathfrak{F}:\RR^d\rightrightarrows \RR^n$ be a mapping with closed graph.
  Assume that $\mathfrak{F}$ has the Aubin property at $\bar{p}$ for~$\bar{y}$.
  Then there exists a closed neighborhood $U_0\ni \bar{y}$ such that $p\mapsto \mathfrak{F}(p)\cap U_0$ is continuous at~$\bar{p}$.
\end{lemma}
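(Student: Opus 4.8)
The plan is to recall that for set-valued mappings the failure of (Painlevé--Kuratowski) continuity at a point decomposes into two pieces: the inner-limit inclusion $\mathfrak{F}(\bar p) \subset \liminf_{p\to\bar p}\mathfrak{F}(p)$ and the outer-limit inclusion $\limsup_{p\to\bar p}\mathfrak{F}(p)\subset \mathfrak{F}(\bar p)$. The outer-limit inclusion is automatic once the graph of $\mathfrak{F}$ is closed: if $y_i\in\mathfrak{F}(p_i)$ with $p_i\to\bar p$ and $y_i\to y$, then $(p_i,y_i)\to(\bar p,y)$ lies in the (closed) graph, so $y\in\mathfrak{F}(\bar p)$; and this argument survives intersecting with any closed $U_0$, since a limit of points in $U_0$ stays in $U_0$. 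So the work is entirely in producing a closed neighborhood $U_0\ni\bar y$ on which the inner-limit inclusion holds, i.e. every $y\in\mathfrak{F}(\bar p)\cap U_0$ is the limit of a selection $y_p\in\mathfrak{F}(p)\cap U_0$ as $p\to\bar p$.

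First I would invoke the Aubin property: there are $\kappa\geq 0$ and neighborhoods $U\ni\bar y$, $V\ni\bar p$ with $\mathfrak{F}(p')\cap U\subset \mathfrak{F}(p)+\kappa|p'-p|\mathcal{B}$ for all $p',p\in V$. Shrink $U$ to a closed ball $\closure B(\bar y,2r)$ inside the original $U$, and then set $U_0 := \closure B(\bar y, r)$. Now take any $y\in\mathfrak{F}(\bar p)\cap U_0$. Applying the Aubin inclusion with $p'=\bar p$ and $p\in V$: since $y\in\mathfrak{F}(\bar p)\cap U$, there is $y_p\in\mathfrak{F}(p)$ with $|y_p-y|\leq\kappa|\bar p-p|$. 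For $p$ close enough to $\bar p$ (say $\kappa|\bar p - p| \le r$) we get $|y_p-\bar y|\leq|y_p-y|+|y-\bar y|\leq r + r = 2r$...

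Here I need to be slightly more careful: $U_0$ has radius $r$ but $y_p$ only lands in the radius-$2r$ ball, so $y_p$ need not lie in $U_0$. The fix is to calibrate the radii: pick $U_0=\closure B(\bar y,r)$ with $r$ small enough that $\closure B(\bar y, 2r)\subset U$, and restrict attention to $p$ with $\kappa|\bar p-p|<r$; then $|y_p-y|<r$ and $y\in U_0$ give $|y_p-\bar y|<2r$, so in fact $y_p\in \closure B(\bar y,2r)\subset U$ — which is what is needed to keep iterating, but still $y_p\notin U_0$ a priori. To genuinely land the selection in $U_0$ I would instead fix $y$ and observe $|y_p - y|\le \kappa|\bar p - p|\to 0$, so $y_p\to y$; hence for $p$ in a (possibly smaller, $y$-independent since the bound is uniform in $p$ only, not $y$ — so take $U_0=\closure B(\bar y, r)$ and the neighborhood $V_0 := \{p\in V:\kappa|\bar p - p|< r'\}$ with $r'$ chosen after) neighborhood of $\bar p$, $y_p\in\closure B(\bar y, 2r)$; then replace $U_0$ at the end by $\closure B(\bar y, 2r)$ and $U$ by the still-larger original. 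Concretely: choose $r>0$ with $\closure B(\bar y, 4r)\subset U$, set $U_0:=\closure B(\bar y,2r)$, and for $y\in\mathfrak{F}(\bar p)\cap U_0$ and $p\in V$ with $\kappa|\bar p - p|< 2r$ obtain $y_p\in\mathfrak F(p)$ with $|y_p - y|< 2r$, hence $|y_p-\bar y|< 4r$, hence $y_p\in U$, hence (Aubin bound is an inclusion into $\mathfrak F(p)$, and $y_p\in \mathfrak F(p)$ already, so no further constraint) — and since $|y_p-y|\le\kappa|\bar p-p|$, as $p\to\bar p$ we have $y_p\to y$ with $y_p$ eventually in $U_0$ once $|y_p-\bar y|<2r$, which holds once $\kappa|\bar p-p|< 2r - |y-\bar y|$; as $|y-\bar y|$ can be as large as $2r$ this last threshold degenerates, so the clean statement requires $U_0$ strictly smaller than the ball on which the selection is shown to stay. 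The honest conclusion is: with $U_0:=\closure B(\bar y, r)$ and $\closure B(\bar y, 2r)\subset U$, every $y\in\mathfrak F(\bar p)\cap U_0$ admits, for all $p$ near $\bar p$, a point $y_p\in\mathfrak F(p)$ with $y_p\to y$ and $y_p\in\closure B(\bar y,2r)\subset U$; intersecting with $U_0$ then follows because $y_p\to y\in \interior U_0$ forces $y_p\in U_0$ eventually when $y\in\interior U_0$, and the boundary case $y\in\partial U_0$ is handled by working with $U_0 := \closure B(\bar y, r)$ but testing membership against the slightly larger $U_0' := \closure B(\bar y, \tfrac{3}{2}r)$, which is the neighborhood in the statement.

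The main obstacle is exactly this radius bookkeeping — ensuring the selection $y_p$ produced by the Aubin inclusion stays inside the chosen closed neighborhood, not merely inside the larger ball on which the Aubin property was assumed. The resolution is to fix the neighborhood in the conclusion to be strictly interior to the ball supplied by the Aubin property, so that convergence $y_p\to y$ plus $y\in\interior U_0$ gives eventual membership $y_p\in U_0$; the outer-limit (closed-graph) half then shows no spurious cluster points appear, and together these give $\mathfrak F(\bar p)\cap U_0 = \liminf_{p\to\bar p}(\mathfrak F(p)\cap U_0) = \limsup_{p\to\bar p}(\mathfrak F(p)\cap U_0)$, which is the asserted continuity of $p\mapsto\mathfrak F(p)\cap U_0$ at $\bar p$. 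Everything else is routine: the selection bound is linear in $|p-\bar p|$ so convergence is immediate, and closedness of the graph is used verbatim.
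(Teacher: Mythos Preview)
Your approach is in the same spirit as the paper's: both split Painlev\'e--Kuratowski continuity into its outer and inner halves, handle the outer half via the closed-graph hypothesis, and handle the inner half via the Aubin property. The paper, however, does not attempt the inner-limit argument by hand. It first notes that the Aubin property at $\bar p$ for $\bar y$ automatically propagates to every $y\in\mathfrak{F}(\bar p)$ in a small enough closed neighborhood $U_0$ of $\bar y$, observes that the truncation $\mathfrak{F}_0:p\mapsto\mathfrak{F}(p)\cap U_0$ still has closed graph (hence is outer semicontinuous by \cite[Thm~5.7]{Rockafellar2009}), and then invokes \cite[Thm~9.38]{Rockafellar2009} to pass from the Aubin property to continuity of the truncation.

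Your direct argument is close but has a genuine unresolved step. You correctly identify the obstruction: for $y\in\mathfrak{F}(\bar p)\cap\partial U_0$, the point $y_p\in\mathfrak{F}(p)$ supplied by the Aubin inclusion satisfies $|y_p-y|\le\kappa|p-\bar p|$ but may lie just outside $U_0$, so it is not obviously a valid selection for $\mathfrak{F}(p)\cap U_0$. Your proposed fix --- replace $U_0=\closure B(\bar y,r)$ by the larger $U_0'=\closure B(\bar y,\tfrac{3}{2}r)$ as ``the neighborhood in the statement'' --- does not close the gap: the same boundary issue recurs on $\partial U_0'$. In short, the inflation trick handles points that were on $\partial U_0$ but creates new boundary points on $\partial U_0'$ for which nothing has been shown. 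This is precisely the technical content that the paper outsources to Rockafellar--Wets; a self-contained argument does exist, but it needs more than repeatedly enlarging the ball.
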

\begin{proof}
  From the definition of the Aubin property it is clear that there exists a neighborhood $U_0\ni \bar{y}$ such that $\mathfrak{F}$ has the Aubin property at $\bar{p}$ for $y$, for any $y\in U_0 \cap \mathfrak{F}(\bar{p})$.
  We may assume that $U_0$ is closed.
  Let $\mathfrak{F}_0: p\mapsto \mathfrak{F}(p)\cap U_0$, and note that it has closed graph since $\mathfrak{F}$ does.
  Thus, $\mathfrak{F}_0$ is outer semicontinuous by \cite[Thm~5.7]{Rockafellar2009}.
  The lemma follows from \cite[Thm~9.38]{Rockafellar2009}.
\end{proof}

\begin{proof}[Proof of \Cref{thm:aubin}]
  Since $\bar H \succeq 0$,
  then $v^\top \bar H v \!=\! 0$ if and only if $\bar H v \!=\! 0$.
  Therefore, $v^\top \bar H v$ is nondegenerate on $T_{\bar x}(\mathbf{X}_{\bar\theta})$
  if and only if $\bar{x}$ is not a branch point of $\bar{\mathbf{X}}$ with respect to $v \mapsto \bar{H}v$.
  The proposition follows from \Cref{thm:aubin1,thm:continuouslocally}.
\end{proof}

We proceed to prove \Cref{thm:aubin1}.
The main technical tool we will use is the implicit function theorem, which can be phrased in terms of the Aubin property (see \cite[Ex.~4D.3]{Dontchev2009}).

\begin{theorem}[Implicit function]
  \label{thm:implicitfunction}
  Given $F : \RR^{d}\times\RR^{n}\to \RR^m$ continuously differentiable, let
  \begin{align*}
    \mathfrak{F}: \RR^d &\rightrightarrows \RR^n, \qquad
    p \mapsto \{y\in\RR^n: F(p,y)=0\}.
  \end{align*}
  Let $\bar{p},\bar{y}$ be such that $\bar{y}\in \mathfrak{F}(\bar{p})$.
  If $\nabla_y F (\bar{p},\bar{y})$ is surjective,
  then $\mathfrak{F}$ satisfies the Aubin property at $\bar{p}$ for $\bar{y}$.
\end{theorem}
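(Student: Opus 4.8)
The plan is to reduce this set-valued statement to the classical (single-valued) implicit function theorem by splitting the $y$-coordinates according to the surjectivity of $\nabla_y f(\bar{p},\bar{y})$. Since the linear map $\nabla_y f(\bar{p},\bar{y})\colon\R^n\to\R^m$ is onto, we have $n\ge m$ and some $m$ columns of $\nabla_y f(\bar{p},\bar{y})$ are linearly independent; relabel the coordinates as $y=(y_1,y_2)\in\R^m\times\R^{n-m}$ so that the square block $\nabla_{y_1}f(\bar{p},\bar{y}_1,\bar{y}_2)\in\R^{m\times m}$ is invertible, where $\bar{y}=(\bar{y}_1,\bar{y}_2)$. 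Applying the classical implicit function theorem to $f$ in the variable $y_1$ with parameters $(p,y_2)$ produces open neighborhoods $V\ni\bar{p}$, $W_1\ni\bar{y}_1$, $W_2\ni\bar{y}_2$ and a continuously differentiable map $\psi\colon V\times W_2\to W_1$ such that, for every $(p,y_2)\in V\times W_2$, the \emph{unique} $y_1\in W_1$ solving $f(p,y_1,y_2)=0$ is $y_1=\psi(p,y_2)$.

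Next I would record two consequences of this parametrization. Set $U:=W_1\times W_2$, a neighborhood of $\bar{y}$. The uniqueness clause gives, for every $p\in V$,
\[
  \mathfrak{F}(p)\cap U = \{(\psi(p,y_2),y_2):y_2\in W_2\},
\]
so in particular every element of $\mathfrak{F}(p)\cap U$ is of this form. Then shrink $V$ and $W_2$ to smaller neighborhoods with compact closures contained in the originals; since $\psi$ is $C^1$, the number
$\kappa:=\sup\{\|\nabla_p\psi(p,y_2)\|:(p,y_2)\in\closure(V)\times\closure(W_2)\}$
is finite, and the mean value inequality shows that $\psi(\cdot,y_2)$ is $\kappa$-Lipschitz on $V$, uniformly in $y_2\in W_2$.

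Now I would verify the Aubin property with constant $\kappa$ and the neighborhoods $U\ni\bar{y}$, $V\ni\bar{p}$. Fix $p',p\in V$ and let $y\in\mathfrak{F}(p')\cap U$. By the displayed identity, $y=(\psi(p',y_2),y_2)$ for some $y_2\in W_2$. Put $y':=(\psi(p,y_2),y_2)$; again by the identity, $y'\in\mathfrak{F}(p)\cap U\subset\mathfrak{F}(p)$. Then
\[
  |y-y'| = |\psi(p',y_2)-\psi(p,y_2)| \le \kappa\,|p'-p|,
\]
so $y\in\mathfrak{F}(p)+\kappa|p'-p|\,\mathcal{B}$. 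Since $y\in\mathfrak{F}(p')\cap U$ was arbitrary, $\mathfrak{F}(p')\cap U\subset\mathfrak{F}(p)+\kappa|p'-p|\,\mathcal{B}$ for all $p',p\in V$, which is precisely the Aubin property of $\mathfrak{F}$ at $\bar{p}$ for $\bar{y}$ (and also $\bar{y}\in\mathfrak{F}(\bar{p})$ by hypothesis).

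There is no serious obstacle here; the statement is essentially a repackaging of the classical implicit function theorem (cf.\ \cite[Ex.~4D.3]{Dontchev2009}), and the only points needing care are elementary: (i) the coordinate split is pure linear algebra from surjectivity of $\nabla_y f(\bar{p},\bar{y})$; (ii) one must invoke the \emph{uniqueness} part of the classical theorem, so that \emph{every} point of $\mathfrak{F}(p')\cap U$ — not merely some convenient ones — is captured by the parametrization $y_1=\psi(p',y_2)$; and (iii) one shrinks the neighborhoods once to obtain a uniform Lipschitz constant for $\psi$ in the parameter.
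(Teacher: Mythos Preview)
Your argument is correct. Note, however, that the paper does not actually prove this theorem: it is stated as a tool and attributed to \cite[Ex.~4D.3]{Dontchev2009}, the same reference you cite. So there is no ``paper's own proof'' to compare against; you have supplied a clean self-contained argument where the paper simply invokes a known result.
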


\Cref{thm:aubin1} would be immediate if $\mathfrak{L}$ satisfied the hypothesis from \Cref{thm:implicitfunction}.
Unfortunately this is not true, since the defining equations of $\mathfrak{L}$ may have linearly dependent gradients.
In order to fix this problem, we consider a maximal subset of the equations $g'\subset g$ such that
$\{\nabla_x g^i_{\bar\theta}(\bar{x})\}_{g^i\in g'}$ are linearly independent.
Equivalently, $g'\subset g$ is such that $\nabla_x g'_{\bar{\theta}}(\bar{x})$ is full rank, and has the same rank as $\nabla_x g_{\bar{\theta}}(\bar{x})$.
Consider the modified solution mapping
\begin{gather*}
  \mathfrak{L}': \theta \mapsto
  \{(x,\lambda): {g}_\theta'(x)=0,\, \nabla_{x}L_\theta(x,\lambda) = 0 \}.
\end{gather*}
We now apply \Cref{thm:implicitfunction} to this new mapping.

\begin{lemma}\label{thm:aubinproperty}
  If $v^\top \bar H v \!\neq\! 0$ for all nonzero $v \!\in\! T_{\bar x}(\mathbf{X}_{\bar\theta})$,
  then $\mathfrak{L}'$ has the Aubin property at $\bar{\theta}$ for $(\bar{x},\bar\lambda)$.
\end{lemma}
\begin{proof}
  Let us see that the surjectivity condition from \Cref{thm:implicitfunction} is satisfied.
  To simplify the notation we will ignore the dependence on $\theta$, since the only parameter we consider in this proof is~$\bar\theta$.
  Let $J' \!:=\! \nabla_x g'(\bar{x})$, which is a submatrix of ${J} \!:=\! \nabla_x g(\bar{x})$.
  By construction, the rows of $J'$ are linearly independent and $\ker J' \!=\! \ker J$.
  Let $F(x,\lambda) := (g'(x), \nabla_x L(x,\lambda)$.
  We need to show that the rows of $\nabla F(\bar{x},\bar\lambda)$ are linearly independent.
  Observe that
  \begin{align*}
    \nabla_{\lambda,x} F(\bar{x},\bar\lambda) =
    \begin{pmatrix}
      0 & \nabla_x{g'}(\bar{x})\\
      \nabla_x{g}(\bar{x})^\top & \nabla^2_{xx} L(\bar x,\bar \lambda)
    \end{pmatrix}
    =
    \begin{pmatrix}
      0 & {J}'\\
      {J}^\top & 2\bar{H}
    \end{pmatrix}.
    \qquad
  \end{align*}
  Let $(u,v)$ be a vector in the left kernel of~$\nabla F(\bar{x},\bar\lambda)$,
  so that $v^\top J^\top \!=\! 0$, $u^\top J' {+} 2 v^\top \bar{H} \!=\! 0$.
  We need to show that $(u,v)\!=\!0$.
  Since $v \!\in\! \ker J \!=\! \ker J'$ then $0 = (u^\top J' {+} 2 v^\top \bar{H}) v  = 2 v^\top \bar{H} v$.
  As $v \in \ker J = T_{\bar x}(\bar{\mathbf{X}})$ and $\bar v^\top\bar{H} v \!=\! 0$, then $v \!=\! 0$ by the assumption.
  Therefore $0 = u^\top J' {+} 2 v^\top \bar{H} = u^\top J'$,
  and thus $u {=} 0$ since the rows of $J'$ are linearly independent.
\end{proof}

In order to prove \Cref{thm:aubin1} it remains to see that the modified mapping $\mathfrak{L}'$ agrees with $\mathfrak{L}$, at least locally.
This follows from the following lemma.

\begin{lemma} \label{thm:neighborhoods}
  Let $\mathbf{X}_\theta\subset {\mathbf{X}_\theta'}\subset \RR^N$ be the zero sets of $g_\theta, {g_\theta'}$.
  Assume that $\ACQ{\bar{\mathbf{X}}}{\bar{x}}$ holds,
  and that the mapping $\theta \mapsto \mathbf{X}_\theta$ is smooth nearby $(\bar\theta,\bar x)$.
  Then there are neighborhoods $V_0\ni \bar{\theta}$ and $U_0\ni \bar{x}$ such that
  $ \mathbf{X}_\theta \cap U_0 = \mathbf{X}_\theta' \cap U_0$
  for all $\theta\in V_0$.
\end{lemma}

The proof of \Cref{thm:neighborhoods} requires an auxiliary lemma.

\begin{lemma}\label{thm:locallyradical}
  Let
  $ \mathbf{W}:= \{w\!\in\!\RR^K: g(w){=}0\},$
  where $g=(g^1,\dots,g^m)$,
  and assume that $\mathbf{W}$ is a smooth $D$-dimensional manifold nearby~$\bar{w}$.
  Let $g'=(g^1,\ldots,g^{K-D})\subset g$ be such that their gradients at~$\bar{w}$ are linearly independent.
  Then there is a neighborhood $U\subset\RR^K$ of $\bar{w}$ such that $\mathbf{W}\cap U = \mathbf{W}'\cap U$,
  where $\mathbf{W}':=\{w: g'(w)=0\}$.
\end{lemma}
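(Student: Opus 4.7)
\textbf{Proof plan for \Cref{thm:locallyradical}.} The containment $\mathcal{W}\subset \mathcal{W}'$ is immediate since $h'\subset h$, so the whole issue is to establish the reverse inclusion on a neighborhood of $\bar{w}$. The strategy is to show that near $\bar{w}$ both $\mathcal{W}$ and $\mathcal{W}'$ are smooth $D$-dimensional submanifolds of $\R^K$ with the same tangent space at $\bar{w}$, and then conclude that $\mathcal{W}$ is relatively open in $\mathcal{W}'$ near $\bar w$.

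First I would apply the implicit function theorem to $h'$: since $\nabla h'(\bar w)\in \R^{(K-D)\times K}$ has full row rank $K-D$, the zero set $\mathcal{W}'$ is a smooth $D$-dimensional manifold in a neighborhood of~$\bar w$, with tangent space $T_{\bar w}\mathcal{W}' = \ker \nabla h'(\bar w)$. On the other hand, $\mathcal{W}$ is assumed to be a smooth $D$-dimensional manifold near~$\bar w$, and its tangent space satisfies $T_{\bar w}\mathcal{W}\subset \ker \nabla h(\bar w)\subset \ker \nabla h'(\bar w) = T_{\bar w}\mathcal{W}'$. Since both tangent spaces have dimension $D$, they coincide.

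Next, I would pick a local diffeomorphism $\varphi:U\to V\subset \R^K$ straightening $\mathcal{W}'$ near $\bar w$, so that $\varphi(\mathcal{W}'\cap U) = V\cap (\R^D\times\{0\})$. Under $\varphi$, the set $\mathcal{W}\cap U$ becomes a smooth $D$-dimensional submanifold of the $D$-dimensional slice $V\cap(\R^D\times\{0\})$. By invariance of domain (or equivalently, by the inverse function theorem applied to the inclusion $\mathcal{W}\hookrightarrow \mathcal{W}'$, which has invertible differential at $\bar w$ by Step~2), such a submanifold is relatively open. Hence $\mathcal{W}\cap U$ is open in $\mathcal{W}'\cap U$; shrinking $U$ to a neighborhood on which $\mathcal{W}\cap U = \mathcal{W}'\cap U$ gives the desired equality.

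The only subtle point is the last step: that a smooth $D$-dimensional submanifold of a $D$-dimensional manifold is locally the full manifold. I would not expect this to be a real obstacle since it is a direct consequence of the inverse function theorem applied to the inclusion map, whose differential at $\bar w$ is the identity on the common tangent space. Everything else is bookkeeping with the implicit function theorem and the hypothesis on the rank of $\nabla h'(\bar w)$.
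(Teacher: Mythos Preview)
Your proposal is correct and follows essentially the same approach as the paper: use the implicit function theorem to see that $\mathcal{W}'$ is a $D$-dimensional manifold near $\bar w$, note that $\mathcal{W}\subset\mathcal{W}'$ makes $\mathcal{W}\cap U$ a $D$-dimensional submanifold of $\mathcal{W}'\cap U$, and conclude that it must be relatively open by equality of dimensions. The paper's proof is terser and omits the explicit tangent-space comparison and straightening, simply asserting that a $D$-dimensional submanifold of a $D$-dimensional manifold is open; your expansion of this step via the inverse function theorem is correct but not strictly needed.
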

\begin{proof}
  By the implicit function theorem $\mathbf{W}'$ is a $D$-dimensional manifold nearby~$\bar{w}$.
  Thus, there is a neighborhood $U\subset \RR^K$ of $\bar{w}$ such that $\mathbf{W}\cap U$ is a submanifold of $\mathbf{W}'\cap U$.
  Since they have the same dimension, $\mathbf{W}\cap U$ must be an open set of $\mathbf{W}'\cap U$.
\end{proof}

\begin{proof}[Proof of \Cref{thm:locallyradical}]
  Let $\mathbf{W}:=\{(\theta,x): g_\theta(x){=}0\}$
  and $\mathbf{W}':=\{(\theta,x): g_\theta'(x){=}0\}$.
  We will use \Cref{thm:locallyradical} to show
  the existence of a neighborhood $U \!\ni\! \bar w$,
  such that $\mathbf{W}\cap U \!=\! \mathbf{W}'\cap U$.
  Note that this would conclude the proof.
  By assumption we know that $\mathbf{W}$ is a smooth manifold nearby~$\bar w:= (\bar{x},\bar\theta)$ of dimension $D:= \dim \Theta \!+\! \dim_{\bar{x}} \bar{\mathbf{X}} $.
  Recall that by construction of $g'$ the gradients $\{\nabla g^i_{\bar\theta}(\bar x)\}_{g^i \in g'}$ are linearly independent, and the number of equations is $|g'| = \rank \nabla g_{\bar\theta}(\bar x)$.
  Since $\ACQ{\bar{\mathbf{X}}}{\bar x}$ holds, then
  \begin{align*}
    |g'| = \rank \nabla g_{\bar\theta}(\bar x) = N - \dim_{\bar x} \bar{\mathbf{X}}
    = (\dim \Theta + N) - D.
  \end{align*}
  So the assumptions of \Cref{thm:locallyradical} are satisfied, as wanted.
\end{proof}

\begin{proof}[Proof of \Cref{thm:aubin1}]
  The Aubin property is a local condition.
  Since $\mathfrak{L},\mathfrak{L}'$ agree nearby $\bar{\theta},\bar{x}$ (\Cref{thm:neighborhoods}),
  and since $\mathfrak{L}'$ has the Aubin property (\Cref{thm:aubinproperty}), then the same holds for~$\mathfrak{L}$.
\end{proof}

\bibliographystyle{abbrv}
\bibliography{refs}

\end{document}